\newtheorem{theorem}{Theorem}[section]
\newtheorem{proposition}[theorem]{Proposition}
\newtheorem{definition}{Definition}
\newtheorem{corollary}[theorem]{Corollary}
\newtheorem{lemma}{Lemma}
\newtheorem*{remark}{Remark}
\DeclareMathOperator{\Ricci}{Ric}
\def\rm#1{\mathrm{#1}}
\def\cal#1{\mathcal{#1}}
\def\bb#1{\mathbb{#1}}
\def\lie#1{\mathfrak{#1}}
\def\lr#1{\left\langle #1\right\rangle}
\newcommand{\halmos}{\hfill $\;\;\;\Box$\\}
\newcommand{\h}{\frac{1}{2}}
\DeclareMathOperator{\id}{id}
\DeclareMathOperator{\Ad}{Ad}
\newtheorem{claim}{Claim}
\title{On the geometry of some Equivariantly related manifolds}
\author{ Llohann D. Sperança \and Leonardo F. Cavenaghi}
\date{}
\begin{document}
	
	\begin{abstract}
		We provide a topological procedure to obtain geometric realizations of both classical and `exotic' $G$-manifolds, such as spheres, bundles over spheres and Kervaire manifolds. As an application, we apply the process known as Cheeger deformations to produce new metrics of both positive Ricci and almost non-negative curvature on such objects.
	\end{abstract}
	\thanks{The first author is financially supported by CNPq, grant number 131875/2016-7. The second author was financially supported by FAPESP, grant numbers  2009/07953-8 and 2012/25409-6 and CNPq 404266/2016-9.}
	
	\maketitle

	\section{Introduction}
	
	Compared with other curvature conditions, examples of manifolds with positive sectional curvature are still sparse in literature (see \cite{Zillerpositive} for a survey). For instance, there are still no obstructions that distinguish the class of simply connected compact positively curved manifolds from   non-negatively curved ones. In parallel to this situation, it is still not known the existence of metrics of positive sectional curvature in interesting manifolds, such as exotic spheres and sphere bundles over spheres. 
	
	Easier to tackle is the existence of metrics of positive Ricci and positive scalar curvatures. Many geometric/topological constructions are well succeed in this aim. For instance, the topological constructions  in \cite{crowley2017positive,gromov1980classification,wraith1997,wraith2007new} and the constructions based on symmetries  in \cite{lawson-yau,gz,searle2015lift}.


	Here we provide a common ground for \cite{DRS,gromoll1974exotic,sperancca2016pulling}, geometric realizing `exotic' manifolds as isometric quotients of principal bundles over `standard' ones. The realization is incarnated in the form of a cross-diagram:
	\begin{equation}\label{eq:CD}
	\begin{xy}\xymatrix{& G\ar@{..}[d]^{\bullet} & \\ G\ar@{..}[r]^{\star} & P\ar[d]^{\pi}\ar[r]^{\pi'} &M'\\ &M&}\end{xy}
	\end{equation}
	Special properties of the constructed diagrams allow one to compare the geometries of $M$ and $M'$ (see section \ref{ssec:notation} and \ref{sec:geometry}). Former instances of diagram \eqref{eq:CD} are found in \cite{duran2001pointed,DPR,DRS,speranca2016pulling}, mainly exploiting the differential topology of exotic spheres. 
	On the other hand, the construction explicitly realizes \emph{Morita equivalences} between related transformation groupoids\footnote{The authors thank O. Brahic and C. Ortiz for pointing it out} which might be of independent interest.
	
	As an application, we establish the existence of metrics of positive Ricci curvature and almost non-negative curvature in new examples (to the best of our knowledge). The construction works specially realizing some connected sums (see section \ref{ssec:connectedsum} or Theorem \ref{thm:main}).
	
	We recall that a manifold $M$ has almost non-negative sectional curvature if it admits a sequence of Riemannian metrics $\{g_n\}_{n\in\bb N}$ satisfying $\sec_{g_n}\geq -1/n$ and $\rm{diam}_{g_n}\geq 1/n$ (see \cite{kapovitch2010nilpotency} for recent developments). 
	
	\begin{theorem}\label{thm:main}
		Let $\Sigma^7$, $\Sigma^8$ be any homotopy sphere in dimensions 7 and 8, $\Sigma^{10}$ any homotopy 10-sphere which bounds a spin manifold and $\Sigma^{4m{+}1}$ the Kervaire sphere in dimension $4m{+}1$. Then the manifolds below admits a sequence $\{g_n\}_{n\in\bb N}$ of metrics with positive Ricci curvature satisfying $\sec_{g_n}\geq -1/n$ and $\rm{diam}_{g_n}\geq 1/n$:
		\begin{enumerate}[$(i)$]
			\item $M^7\#\Sigma^7$, where $M^7$ is any 3-sphere bundle over $S^4$, 
			\item $M^8\#\Sigma^8$, where $M^8$ is either a 3-sphere bundle over $S^5$ or a 4-sphere bundle over $S^4$
			\item $M^{10}\#\Sigma^{10}$, where 
			\begin{enumerate}[$(a)$]
				\item $(M^8\times S^2)$,  $M^8$  as in item $(2)$
				\item  $M^{10}$ is any 3-sphere bundle over $S^7$, 5-sphere bundle over $S^5$ or 6-sphere bundle over $S^4$				
			\end{enumerate}
			\item $M^{4m+1}\#\Sigma^{4m+1}$ where \label{item:5} 
			\begin{enumerate}[$(a)$]
				\item $S^{2m}\cdots M^{4m+1}\to S^{2m+1}$ is a sphere bundle representing any multiple of the unitary tangent of $S^{2m+1}$
				\item $\bb CP^{m}\cdots M^{4m+1}\to S^{2m+1}$ is the $\bb CP^m$-bundle associated to any  multiple of $U(m)\cdots U(m+1)\to S^{2m+1}$
				\item $\frac{U(m+2)}{SU(2)\times U(m)}$	
			\end{enumerate}
			\item $(M^{8m+k}\times N^{5-k})\#\Sigma^{8m+5}$ where $N^{5-k}$ is any manifold with positive Ricci curvature and
			\begin{enumerate}[$(a)$]
				\item $S^{4m+k}\cdots M^{4m+k}\to S^{4m+1}$ is a sphere bundle representing the $k$-th (fiberwise) suspension of any multiple of the unitary tangent of $S^{4m+1}$
				\item $k=1$, $\bb HP^{m}\cdots M^{8m+1}\to S^{4m+1}$ is the $\bb HP^m$-bundle associated to any  multiple of $Sp(m)\cdots Sp(m+1)\to S^{4m+1}$
				\item $k=0$, $M=\frac{Sp(m+2)}{Sp(2)\times Sp(m)}$
				\item $k=1$, $M=M^{8m+1}$ as in item  $(\ref{item:5})$
			\end{enumerate}
		\end{enumerate}	
	\end{theorem}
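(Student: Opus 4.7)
The plan is to treat all items (i)--(v) uniformly by realizing each listed manifold as the $M'$ of a cross-diagram \eqref{eq:CD} whose companion $M$ already admits a $G$-invariant Riemannian metric with positive Ricci and non-negative sectional curvature, and then transferring these curvature properties to $M'$ via a Cheeger deformation on the total space $P$ followed by push-down along $\pi'$.

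The first step is to produce the diagrams case by case. For the sphere-bundle entries (items (i), (ii), (iii)(b), (iv)(a), (v)(a)) I would use the clutching/pulling-back framework of \cite{DRS,sperancca2016pulling,DPR}: write the standard bundle $M\to S^n$ as two trivial halves glued by a map $\phi$, promote $\phi$ to a lift into an appropriate principal $G$-bundle $P$ with base $M$, and observe that re-twisting the clutching by a representative of $\Sigma$ in an appropriate diffeomorphism group produces $M'\cong M\#\Sigma$ as the quotient $P/G$ along the $\star$ arrow. The $\mathbb{CP}^m$- and $\mathbb{HP}^m$-bundle entries in (iv)(b) and (v)(b) then arise by taking associated bundles of the already constructed principal fibration, while the homogeneous spaces in (iv)(c) and (v)(c) fit as $P/K$-quotients of a diagram whose top entry is a larger compact Lie group acting transitively. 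The product cases (iii)(a) and (v) are handled by running the construction on the first factor only and using the product metric with the given positive-Ricci factor $N$.

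The second step is to equip each base $M$ with a $G$-invariant metric of non-negative sectional curvature and positive Ricci curvature. Sphere bundles over spheres carry such metrics via the classical Grove--Ziller / Cheeger--Gromoll connection-metric constructions; the homogeneous quotients admit normal homogeneous metrics, which are non-negatively curved and, since $\pi_1$ is finite, have positive Ricci; products with the positive-Ricci factor $N$ are immediate. Pulling this metric back to $P$ through the Riemannian submersion $\pi$ and taking the sum with a bi-invariant metric on the fibres $G$ gives a $G\times G$-invariant metric on $P$ with the same curvature properties.

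Finally, I would apply a Cheeger deformation on $P$ with respect to the free $G$-action corresponding to the $\star$ arrow, which descends smoothly along $\pi'$ to a family of metrics $g_n$ on $M'$. As the deformation parameter $t_n\to 0$ the sectional curvature of $g_n$ approaches that of a non-negatively curved limit so that $\sec_{g_n}\geq -1/n$ and the required diameter bound can be arranged, while the Ricci tensor remains strictly positive. The main obstacle will be the last point: verifying that no horizontal direction on $M'$ asymptotes to a Ricci-zero direction under the deformation. This reduces to showing that the orbit tensor of the $G$-action on $P$ has full rank on a dense open set and interacts transversally with the connection of $\pi$, a condition I expect to follow from the structural properties of diagrams of type \eqref{eq:CD} developed in section \ref{sec:geometry}, applied in each of the cases (i)--(v) separately.
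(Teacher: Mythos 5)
The skeleton of your proposal (cross-diagram $M\leftarrow P\to M'=M\#\Sigma^n$ plus Cheeger deformation) is the right one, but there are concrete gaps that would sink the argument as written.

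The most serious is the claim that the Kaluza--Klein metric on $P$ ``has the same curvature properties'' as $(M,g_M)$. O'Neill's formula goes the wrong way for this: for $\pi\colon P\to M$ a Riemannian submersion and $\bar X,\bar Y$ horizontal lifts,
\[K_{g_M}(X,Y)=K_{g_P}(\bar X,\bar Y)+\tfrac{3}{4}\|[\bar X,\bar Y]^{\mathcal{V}^\pi}\|^2_{g_P},\]
so the total space has \emph{less} curvature than the base, not the same. Non-negative curvature on $M$ does not lift to $P$, and the horizontal Ricci curvature can drop, so pushing down to $M'$ via $\pi'$ need not produce a metric with the claimed bounds. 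The paper's Proposition~\ref{prop:util} is precisely the device that fixes this: after a finite Cheeger deformation of the $G\times G$-invariant metric on $P$, the $\mathcal{V}^\pi$-component of the $A$-tensor can be made uniformly small (Lemma~\ref{util}), and since $\mathcal{H}''$ is simultaneously $\pi$- and $\pi'$-horizontal, one gets the controlled comparison $K_{g_{M'_t}}(d\pi'X,d\pi'Y)\geq K_{g_{M_t}}(d\pi X,d\pi Y)-\epsilon\|X\wedge Y\|^2$. Your proposal does not have this lemma and cannot run without it.

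Related to this, the paper works with the weaker and more robust hypothesis of \emph{positive horizontal Ricci curvature} on $M$ (Theorem~\ref{thm:principal}, transferred across the diagram by Theorem~\ref{thm:moritaRicci}), rather than demanding a $G$-invariant metric on $M$ that is simultaneously non-negatively curved and Ricci-positive. This matters both technically and because for several of the listed $M$ (general Milnor bundles $M_{m,n}$, the larger homogeneous quotients) there is no known $G$-invariant metric with both global curvature properties, so your stronger hypothesis is in doubt in exactly the cases the theorem is after.

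Two smaller points. First, the direction of the Cheeger parameter in your last paragraph is inverted: the sectional curvature of (suitably reparametrized) planes is \emph{non-decreasing as $t$ increases}, not as $t_n\to 0$; $t\to 0$ just recovers the original metric. The almost non-negativity is extracted by increasing $t$, not by sending it to zero. Second, the construction of the diagram itself is not done by ``re-twisting the clutching by a representative of $\Sigma$'': the paper's Theorem~\ref{thm:connectedsum} pulls back a fixed $G$-$G$-bundle over $S^n$ through the equivariant Thom--Pontryagin collapse map at a $G$-fixed point whose isotropy representation is $(\rho_n)$, and the connected sum $M\#\Sigma^n$ appears automatically as $f^*P/\star$. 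Verifying that each listed $M$ does carry a $G$-fixed point with the required isotropy representation $(\rho_n)$ is what the ``Examples'' and ``Connected Sums'' sections supply, and your proposal would still need that case-by-case check.
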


	We recall that a homotopy $n$-sphere is a manifold homotopy equivalent to the unitary sphere of $\bb R^{n+1}$. For $n>4$, homotopy spheres are homeomorphic to standard spheres (Smale \cite{smale}). It is \textit{exotic} if it is not diffeomorphic to the standard sphere. As computed by Kervaire and Milnor \cite{km}, there are (up to orientation) exactly 14 exotic spheres in dimension 7, one in dimension 8 and one in dimension 10 which bounds a Spin manifold (there are two more that does not). Kervaire spheres are the boundary of $P^{2n}(\mathbf{A}_2)$ (in Bredon \cite{brebook} notation), the plumbing of two copies on the (disc bundle associated to the) tangent bundle of $S^n$, $n=2m+1$ (see section \ref{section:examples} for a more detailed description). The Kervaire sphere is known to be exotic for all but finitely many (known) values of $m$ (see Hill--Hopkins--Ravenel \cite{hill2016nonexistence} and Wang--Xu \cite{wang2016triviality}). When $n$ is even, the resulting boundary, $\Sigma^{4m-1}=\partial P^{4m}(\mathbf{A}_2)$, has $H_{2m}(\Sigma^{4m-1})\cong \bb Z_3$.
	
	As an extra contribution, we provide metrics of positive Ricci curvature on quotients of $\Sigma^{4m-1}$. Whenever $n$ is even or odd, $\partial P^{2n}(\mathbf{A}_2)$ posses a (bi-axial) $O(n)$-action which restricts to a free $SO(2)$ (respectively, $SU(2)$) action when $n$ is even (respectively, when $n=0\mod 4$). We denote the quotient by $\Sigma P\bb C^n$ (respectively, $\Sigma P\bb H^{n/2}$). 
	$\Sigma P\bb C^n$ (respectively, $\Sigma P\bb H^{n/2}$) is an $U(n)$-manifold (respectively, a $Sp(n/2)$-manifold).
	
	\begin{theorem}
		$\Sigma P\bb C^n$ and $\Sigma P\bb H^{n/2}$ posses invariant metrics with positive Ricci curvature.
	\end{theorem}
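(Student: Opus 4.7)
The plan is to derive the theorem from the same cross-diagram technology used for Theorem \ref{thm:main}. The first step is to build a cross-diagram of the form \eqref{eq:CD} with $M'=\partial P^{2n}(\mathbf{A}_2)$ and $M$ a suitable ``standard'' $O(n)$-manifold (a sphere bundle over a sphere carrying a compatible bi-axial action, as in the constructions underlying Theorem \ref{thm:main}(4) and \ref{thm:main}(5)). The bi-axial nature of the plumbing recalled in Section \ref{section:examples} guarantees that such an $O(n)$-equivariant realization exists: the full $O(n)$-action on $\partial P^{2n}(\mathbf{A}_2)$ lifts to $P$, commutes with the principal $G$-action, and covers an analogous $O(n)$-action on $M$.

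Second, I would equip $M$ with an $O(n)$-invariant metric of positive Ricci curvature, lift it to $P$ via a principal connection, and perform a Cheeger deformation along $G$. Because the $O(n)$-symmetry commutes with $G$, the deformed metric is $O(n)$-invariant. The geometry-transfer machinery of Section \ref{sec:geometry} (which also powers Theorem \ref{thm:main}) then produces on $\partial P^{2n}(\mathbf{A}_2)$ an $O(n)$-invariant submersion metric of positive Ricci curvature, with the necessary positive contribution coming from the $A$-tensor of the second principal bundle $P\to\partial P^{2n}(\mathbf{A}_2)$.

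Third, since this metric is $O(n)$-invariant, the free $SO(2)$ (respectively $SU(2)$) subaction is isometric, and the quotient map to $\Sigma P\bb C^n$ (respectively $\Sigma P\bb H^{n/2}$) is a Riemannian submersion. To pass positive Ricci through this final submersion I would perform one more Cheeger deformation, now with respect to the free subgroup. For sufficiently large deformation parameter the metric upstairs remains positively Ricci curved, while O'Neill's formula downstairs gains a positive $A$-tensor term that outweighs the second-fundamental-form defect of the orbits. The resulting metric is automatically invariant under the residual $U(n)$ (respectively $Sp(n/2)$)-action.

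The main technical obstacle is this last step: a Cheeger deformation along a free action does not \emph{a priori} force positive Ricci on the quotient for every value of the parameter. The argument exploits the extra $O(n)$-invariance to bound the $A$-tensor and the orbit geometry uniformly along the homogeneous fiber $O(n)/SO(2)$ (respectively $O(n)/SU(2)$), ensuring that for large deformation parameter the horizontal Ricci curvature downstairs remains strictly positive. Once this uniform control is established, $O(n)$-invariance of the construction makes all the intermediate deformations compatible, and the theorem follows.
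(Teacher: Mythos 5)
Your overall instinct — use the cross-diagram $S^{2n-1}\leftarrow L_n\to\Sigma^{2n-1}$ together with Cheeger deformations — is the right one, but the argument as written has a confusion at the start and a genuine gap at the end.

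First, a smaller point: you speak of ``an $O(n)$-equivariant realization'' in which the $O(n)$-action ``commutes with the principal $G$-action.'' In the relevant diagram the group is $G=O(n)$ itself: $l_n\colon L_n\to S^{2n-1}$ is an $O(n)$-$O(n)$-bundle, the $\star$-action and the principal action are both $O(n)$, and the $O(n)$-action on $\Sigma^{2n-1}$ is exactly the one descended from the principal action. There is no auxiliary $O(n)$-symmetry to arrange separately; it comes for free.

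The substantial gap is in the last step, and you identify it yourself: a Riemannian submersion by a free isometric $S^1$ (or $S^3$) action does not automatically preserve positive Ricci, and a Cheeger deformation along the free subgroup does not ``a priori force positive Ricci on the quotient.'' Your proposed repair — uniformly bounding the $A$-tensor along the $O(n)/SO(2)$ (resp.\ $O(n)/SU(2)$) fibers so that for large deformation parameter the downstairs curvature stays positive — is neither what the paper does nor something the paper's machinery justifies. The paper never tries to push a \emph{positive Ricci} metric through the $S^1$ (resp.\ $S^3$) quotient. Instead it tracks only positive \emph{horizontal} Ricci curvature, and converts to full positive Ricci exactly once, at the very end, via Theorem~\ref{thm:principal}. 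Concretely: (1) Theorem~\ref{thm:moritaRicci} applied to the $O(n)$-$O(n)$-bundle $L_n$ gives $\Sigma^{2n-1}$ an $O(n)$-invariant metric with positive horizontal Ricci; (2) vectors orthogonal to the $U(m)$- (resp.\ $Sp(m)$-) orbits on $\Sigma P\mathbb{C}^{2m-1}$ (resp.\ $\Sigma P\mathbb{H}^{2m-1}$) lift to vectors orthogonal to the $O(n)$-orbits on $\Sigma^{2n-1}$, so the O'Neill inequality transfers positive horizontal Ricci to the quotient with respect to the residual $U(m)$- (resp.\ $Sp(m)$-) action; (3) the $U(m)$- (resp.\ $Sp(m)$-) orbits on the quotient have finite fundamental group, so Theorem~\ref{thm:principal} produces an invariant metric of positive Ricci curvature after one Cheeger deformation. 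The missing idea in your proposal is precisely step (2): what must be controlled is the horizontal Ricci relative to the large residual group $U(m)$ or $Sp(m)$ (whose orbits have finite $\pi_1$), not relative to the free circle or $S^3$ you are quotienting by. With that reformulation, the final Cheeger deformation goes through cleanly and no uniform $A$-tensor estimate is needed.
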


	The restriction/variety of examples in Theorem \ref{thm:main} is related to the presence of symmetries and explicit realizations of spheres. To give a more precise statement, we fix the representations below:
	\begin{itemize}
		\item[$(\rho_7)$] $n=7$, $G=S^3$: $\rho=\rho_{1}\oplus \rho_1\oplus \rho_0$, where $\rho_1:S^3\to O(3)$ is the  representation induced by the double cover $S^3\to SO(3)$ and $\rho_0:S^3\to SO(1)$ is the trivial one 
		\item[$(\rho_8)$] $n=8$, $G=S^3$: $\rho=\rho_{\h}\oplus \rho_1\oplus \rho_0$, where $\rho_{\h}:S^3\to O(4)$ is the representation induced by right (or left) multiplication by a quaternion
		\item[$(\rho_{10})$] $n=10$, $G=S^3$: $\rho=\rho_{\h}\oplus \rho_1\oplus 3\rho_0$ or $\rho=\rho_1\oplus \rho_1\oplus \rho_1\oplus \rho_0$
		\item[$(\rho_{4m+1})$] $n=4m+1$, $G=U(m)$: $\rho=\rho_{U(m)}\oplus\rho_{U(m)}\oplus\rho_0$ where $\rho_{U(m)}:U(m)\to O(2m)$ is the standard representation
		\item[$(\rho_{8m+5})$] $n=8m+5$, $G=Sp(m)$: $\rho=\rho_{Sp(m)}\oplus\rho_{Sp(m)}\oplus 5\rho_0$ where $\rho_{Sp(m)}:Sp(m)\to O(4m)$ is the standard representation
	\end{itemize}

	Given a $G$-invariant metric on $M$, we recall that $M/G$ is a metric space with orbit distance. The $G$-orbits have constant dimension on an open and dense set of $M^*$, called the principal part (see Bredon \cite{brebook}). The subset $M^*/G$ has a natural manifold structure that makes the quotient $M^*\to M^*/G$ a Riemannian submersion.
	
	\begin{theorem}\label{thm:main2}
		Let $M^n$ be a $G$-manifold that posses a fixed point whose isotropy representation is $(\rho_n)$. Then: 
		\begin{enumerate}
			\item if the orbits on $M$ have finite fundamental group and $M$ posses a $G$-invariant metric such that  $Ricci_{M^*/G}\geq1$, then $M^n\#\Sigma^n$ has a metric with positive Ricci curvature 
			\item if  $M$ posses a family of $G$-invariant metrics such that $M^*/G$ has almost non-negative curvature, then $M^n\#\Sigma^n$ has almost non-negative curvature
			\item if $M$ posses a family of metrics satisfying (1) and (2), then $M^n\#\Sigma^n$ admits a family of metrics $\{g_n\}_{n\in\bb N}$ with positive Ricci curvature satisfying $\sec_{g_n}\geq -1/n$ and $\rm{diam}_{g_n}\geq 1/n$ 
		\end{enumerate}
	\end{theorem}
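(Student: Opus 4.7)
The plan is to realize $M\#\Sigma^n$ via the cross-diagram construction of the introduction as a principal $G$-quotient of an auxiliary manifold $P$ that also fibers over $M$, so that the two $G$-orbit spaces coincide; I would then transfer the curvature data from the orbit space of $M$ to that of $M'=M\#\Sigma^n$ and apply a Cheeger deformation to lift positive Ricci (and the other curvature bounds) to $M'$ itself.

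Concretely, I would first apply the construction of Section \ref{sec:geometry} to the fixed-point data $(\rho_n)$ to produce a cross-diagram as in \eqref{eq:CD} in which both $\pi\colon P\to M$ and $\pi'\colon P\to M'$ are principal $G$-bundles and $M'\cong M\#\Sigma^n$. The resulting Morita equivalence between the transformation groupoids $G\ltimes M$ and $G\ltimes M'$ identifies the two orbit spaces together with their isotropy groups and isotropy representations. In particular, the induced $G$-action on $M'$ has a fixed point with isotropy representation $(\rho_n)$, orbits with the same fundamental groups as on $M$, and a principal orbit space isometric (in any compatible metric) to the principal stratum of $M^*/G$.

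For Part (1), starting from the assumed $G$-invariant metric $g$ on $M$ with $\Ricci_{M^*/G}\geq 1$, I would combine it with a principal connection on $P\to M$ and a bi-invariant metric on $G$ to obtain a $G\times G$-invariant metric on $P$, and push it down via $\pi'$ to a $G$-invariant metric $g'$ on $M'$. Both $\Ricci_{(M')^*/G}\geq 1$ and the finiteness of orbit fundamental groups transfer to $M'$ by construction. The Searle--Wilhelm lifting theorem \cite{searle2015lift} then yields positive Ricci curvature on $M'=M\#\Sigma^n$ after a Cheeger deformation of $g'$ with sufficiently small parameter.

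For Parts (2) and (3), the same transfer applied to the given family $\{g_n\}$ on $M$ produces metrics $\{g'_n\}$ on $M'$ whose principal orbit spaces inherit almost non-negative curvature and the diameter lower bound. A subsequent Cheeger deformation with small parameter preserves the sectional-curvature lower bound up to a correction vanishing with the parameter (Cheeger's original estimate), while the diameter is controlled by keeping the horizontal directions unscaled. Part (3) follows by a two-parameter balancing: for each $n$ one selects a Cheeger parameter small enough to force positive Ricci via Searle--Wilhelm yet large enough not to spoil the $\sec\geq -1/n$ bound. The main obstacle I anticipate is not the curvature estimates themselves but the metric compatibility of the Morita equivalence — namely, that $P$ admits a single metric making $\pi$ and $\pi'$ simultaneous Riemannian submersions onto the two orbit spaces with isometric principal strata. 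This is precisely the property of the cross-diagrams that Section \ref{sec:geometry} is set up to establish, but the simultaneous quantitative control of $\Ricci$, $\sec$ and diameter under the Cheeger deformation needed for Part (3) is where the most delicate bookkeeping must be done.
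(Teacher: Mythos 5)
Your proposal is correct and takes essentially the same route as the paper: construct the cross-diagram $M\leftarrow P\to M'$ with $M'\cong M\#\Sigma^n$ via Theorem~\ref{thm:connectedsum}, produce a $G\times G$-invariant Kaluza--Klein metric on $P$ making $M/G$ and $M'/G$ isometric (Proposition~\ref{prop:existence-invariant-metric} and its corollary), and then invoke Searle--Wilhelm's lifting theorems together with Cheeger deformation. The ``main obstacle'' you flag --- a single metric on $P$ making both $\pi$ and $\pi'$ Riemannian submersions with isometric orbit spaces --- is exactly what the paper's Section~\ref{sec:geometry} establishes, and the paper merely supplements your Searle--Wilhelm route with a self-contained alternative (Theorems~\ref{thm:principal} and~\ref{thm:moritaRicci}).
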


	For Theorem \ref{thm:main2}, we construct a diagram as in \eqref{eq:CD} with $G$ compact and $M'$ diffeomorphic to $M^n\#\Sigma^n$. For $G$ compact, we prove that, given a $G$-invariant metric on $M$, there is a $G$-invariant metric on $M'$  such that $M'/G$ is isometric to $M/G$ (see section \ref{ssec:notation} for the $G$-action on $M'$). Thus, if $M$ satisfies the hypothesis in Theorems A, B or C in Searle--Wilhelm \cite{searle2015lift}, so does $M'$. Using diagram \eqref{eq:CD}, we also provide a self-contained proof of Theorem \ref{thm:main2} (the greater machinery in \cite{searle2015lift} can be avoided in Theorem \ref{thm:main} since diagram \eqref{eq:CD} gives a better control of the geometry on the non-principal part of $M$).
	
	Some prior works related to Theorem \ref{thm:main}: the works of Nash \cite{nash1979positive} and Poor \cite{poor1975some} provides metrics of positive Ricci curvature on sphere bundles over spheres. The procedure in Fukaya--Yamaguchi \cite{fukaya-yamaguchi} is consistent with Nash \cite{nash1979positive} and Poor \cite{poor1975some}, thus, providing metrics which are simultaneously Ricci positive and almost non-negative on sphere bundles over spheres. Crowley--Wraith \cite{crowley2017positive} considers positive Ricci curvature on connected sums of highly connected odd-dimension manifolds with exotic spheres, covering the Ricci curvature in example $(i)$ and (possibly partially) in example $(iv)$. Almost non-negative curvature in homotopy 7-spheres was established by Searle--Wilhelm \cite{searle2015lift}.

	The connected sum $M^n\#\Sigma^n$ does not always results in a new manifold. Following de Sapio \cite{DeSII,de1969manifolds}  (recall from \cite[page 3190]{speranca2016pulling} and \cite[Section V.8]{brebook} that, in de Sapio's notation, $\Sigma^8\in\sigma(\pi_3SO(4),\pi_4SO(3))$, $\Sigma^{10}$ is an order 3 generator of $\sigma(\pi3SO(6),\pi_6SO(3))$ and $\Sigma^{4m+1}=\sigma(\tau_{2m},\tau_{2m})$, where $\tau_n:S^n\to SO(n+1)$ is a transition function for the tangent bundle of $S^{n+1}$), one concludes that $M^n\#\Sigma^n\cong M^n$ if $M^n$ is the non-trivial $SU(2)$-principal bundle over $S^5$, the $SU(2)$-principal bundle over $S^7$ corresponding to a generator of $\pi_6SU(2)\cong \bb Z_{12}$ and the unit sphere bundle $T_1S^{2m+1}$. $M^n\#\Sigma^n\ncong M^n$ for the following $M^n$: any product of standard spheres; any 4-sphere bundle over $S^4$; $\Sigma^8\times S^2$; the $SU(2)$-principal bundles corresponding to three times a generator in $\pi_6SU(2)$. All other cases are undecidable within the authors reach.

	\subsection{Cross Diagrams and Notation} \label{ssec:notation}
	The present construction encompasses the constructions in \cite{gromoll1974exotic,DPR,DRS,sperancca2016pulling}. The aim is to produce a diagram as in \eqref{eq:CD} out of a given  $G$-manifold $M$ and equivariant bundle data (see Definition \ref{defn:howtoconstruct}  for details). For short, we denote diagram \eqref{eq:CD} by $M\stackrel{\pi}{\leftarrow}P\stackrel{\pi'}{\to}M'$.
	
	In diagram \eqref{eq:CD}, $P$ is a \textit{(special) $G$-$G$-bundle}: following \cite{lashof1982equivariant}, a $G$-$G$-bundle $\pi:P\to M$ is a principal $G$-bundle equipped with an extra $G$-action, which we call the \textit{$\star$-action}. Both $G$-actions on $P$ are assumed to commute, making $P$ a $G{\times }G$-manifold. When dealing with the $G\times G$-action, we use $G{\times}\{\id\}$ as the $\star$-action and $\{\id\}{\times} G$ as the principal action of $\pi$. We denote the $\star$-action (respectively, the principal action of $\pi$) by left juxtaposition (respectively, right juxtaposition). That is, for every $(r,s,p)\in G\times G\times P$, $(r,s,p)\mapsto rps^{-1}$ (we use the inverse of $s$ to keep the `left action' convention).
	
	Since the $\star$-action commutes with the principal action of $\pi$, it descends to a $G$-action on $M$ which we denote by $(r,x)\mapsto r\cdot x$. The action makes $\pi$ equivariant: $\pi(rps^{-1})=r\pi(p)$. In particular, if $(r,s)\in G\times G$ fix $p$, $r$ fixes $\pi(p)$. On the other hand, if $r\in G$ fixes $x\in M$, since the principal action of $\pi$ acts freely and transitively on the fibers of $\pi$, for every $p\in \pi^{-1}(x)$, there is a unique $s_p\in G$ such that $rps_p^{-1}=p$. Note that $s_{pg^{-1}}=gs_pg^{-1}$. We denote by $(G\times G)_p$ and $G_x$ the isotropy subgroups on $P$ and $M$.

	As in \cite{speranca2016pulling}, we consider only {special} $G$-$G$-bundles (and, by doing so, omit the adjective `special') by imposing two further conditions: the $\star$-action must be free; for every $p\in P$, there is $g\in G$ such that\footnote{In practice we use Definition \ref{defn:howtoconstruct}, based on transition functions. An equivalence between \eqref{eq:isotropy}  and Definition \ref{defn:howtoconstruct} follows from Proposition \ref{prop:fibradonormal}.}
	\begin{equation}\label{eq:isotropy}
	(G\times G)_p=\{(h,ghg^{-1})~|~h\in G_{\pi(p)} \}.
	\end{equation}
	Observe that, if $M\stackrel{\pi}{\leftarrow}P\stackrel{\pi'}{\to}M'$ is a special $G$-$G$-bundle, so it is $M'\stackrel{\pi'}{\leftarrow}P\stackrel{\pi}{\to}M$ (by interchanging the wholes of the two actions). In particular, the principal action of $\pi$ defines a $G$-action on $M'$.

	Most manifolds will be constructed by patching together subsets. For $A_i\subset X_i$, $i\in\Lambda$, and  bijections $f_{ij}:A_i\to A_j$ satisfying $f_{ik}f_{kj}f_{ji}(x)=f_{ii}(x)=x$ (for any $x$ it make sense), we denote
	$\{f_{ij}:A_i\to A_j\}_{i,j\in\Lambda}$, we denote 
	\begin{equation}
	\cup_{f_{ij}}X_i={{\cup}X_i}\big/{\sim},
	\end{equation} 
	where $\sim$ is the equivalence relation whose non-trivial relations are $A_i\ni x\sim f_{ij}(x)\in A_j$. Of particular interest are the cases of `twisted manifolds' and of principal bundles: let $\{X_i=U_i\}_{i\in\Lambda}$ be an open cover for a manifold $M$ and $f_{ij}:U_i\cap U_j\to U_i\cap U_j$ diffeomorphisms. The new object $\cup_{f_{ij}}U_i$ is a manifold locally resembling $M$, but with possibly different global nature; to define a principal bundle, it is sufficient to provide a collection of \textit{transition functions}. That is,  a collection $\{\phi_{ij}:U_i\cap U_j\to G\}_{i,j\in\Lambda}$ satisfying the \textit{cocycle condition}:
	\begin{equation}\label{eq:cocycle}
	\phi_{ij}\phi_{jk}\phi_{ki}(x) =\phi_{ii}(x)=\rm{id}\in G, ~\forall x \in U_i\cap U_j \cap U_k.
	\end{equation}
	The total space $P=\cup_{f_{ij}}X_i$ is recovered by setting $X_i=U_i\times G$ and $f_{ij}(x,g)=f_{\phi_{ij}}(x,g)=(x,g\phi_{ij}(x))$.  The projection $\pi\,{:}\,P\to M$ and the principal action $(s,p)\mapsto ps^{-1}$ are defined on $U_i\times G$ by
	\begin{equation}
	\pi(x,g)=x,\qquad (x,g)s^{-1}=(x,sg).
	\end{equation}
	Both $\pi$ and the multiplication (as defined above) commute with $f_{\phi_{ij}}$, defining global maps.
	
	Although, only open covers are considered above, the case  where $M$ is the union of two manifolds by their boundaries is present in the examples. Let $M=X\cup Y$ and $f:\partial X\to\partial Y$ a diffeomorphism. The manifold $M'=X\cup_f Y$ admits a unique smooth structure such that the inclusions $X,Y\subset M'$ are smooth. Such smooth structure is obtained through a collaring argument:   using the collaring theorem (Theorem (3.3) in Kosinski \cite{ko}) one can extend $X,Y$ to open manifolds $X',Y'$ (without boundaries) whose 'ends' are diffeomorphic to $\partial X\times (-1,1)$, $\partial Y\times (-1,1)$ with embeddings $j_X:\partial X\times (-1,1)\to X'$, $j_Y:\partial Y\times (-1,1)\to Y'$. Then $M'$ is diffeomorphic to $X'\cup_{f'} Y'$ where $f'(j_X(x,t))=j_Y(f(\tilde x),-t)$. Isotopies of $f$ defines isotopies of $f'$ implying that the manifold structure of $M'$ does not depend on the choices of $j_X,j_Y$ or the isotopic class of $f$.
	
	
	A sphere bundle $S^{k{-}1}\cdots M\to B$ is called \textit{linear} if $O(k)$ (acting in the usual way on $S^{k{-}1}$) is a structure group. Equivalently, if there is a set of transition functions $\{\phi_{ij}:U_i\cap U_j\to O(n)\}$.
	
	Recall that linear sphere bundles can always be \textit{suspended}: consider $s_k:O(k)\to O(k{+}1)$  the inclusion of $O(k)$ as the subgroup of $O(k{+}1)$ with 1 in the upper-left corner. If $\{\phi_{ij}:U_i\cap U_j\to O(k)\}$ are transition functions for $S^{k-1}\cdots M\stackrel{\pi}{\to}B$, then $\{s_k\phi_{ij}:U_i\cap U_j\to O(k{+}1)\}$ are transition functions for a linear $S^k$-bundle over $B$.
	
	We denote the quaternionic field as $\bb H$ and its subspace of pure imaginary quaternions as $\rm{Im}\bb H$. The norm, inverse and conjugate of a quaternion $x$ are denoted as $|x|,~x^{-1},~\bar x$, respectively.
	
	Some standard spaces (such as discs and spheres) are described in coordinates. When we denote $M\subset V_1\times V_2\times V_3$ we mean that a point in $M$ is a triple $(x_1,x_2,x_3)$ where $x_i\in V_i$ -- we observe there might be relations among the $x_i$'s. For instance, $S^7\subset \bb H\times \bb H$ denotes $S^7=\{(x,y)\in \bb H\times\bb H~|~ |x|^2+|y|^2=1\}$.
	
	We denote by $R_g$ the Riemannian tensor of the metric $g$, adopting the sign convention 
	\[R_g(X,Y)Z=\nabla_X\nabla_YZ-\nabla_Y\nabla_XZ-\nabla_{[X,Y]}Z.\]
	We denote $K_g(X,Y)=g(R_g(X,Y)Y,X)$, the unreduced sectional curvature of $g$ and by $||X\wedge Y||^2_g=||X||^2_g||Y||^2-g(X,Y)^2$. The Ricci tensor of $g$ is denoted by
	\[\Ricci_g(X,Y)=\sum_{i=1}^ng(R(e_i,X)Y,e_i), \]
	where $\{e_1,...,e_n\}$ is an orthonormal basis for $g$. The associated quadratic form is denoted $\Ricci_g(X)=\Ricci_g(X,X)$.
	
	
	

	
	\subsection*{Acknowledgments} Part of this work was accomplished during the first author's PhD under Prof. A. Rigas and Prof. C. Dur\'an and a postdoc period under Prof. L. A. B. San Martin. The first author thanks all of them.
	
	Both thank Prof. Lino Grama  for suggestions and comments. The second author also thanks Prof. L. Grama for teaching significant part of his geometrical background.
	
	\section{Constructing $G$-$G$-bundles}
	\label{sec:star}

	Consider a $G$-manifold $M$, an open cover $\{U_i\}$ and a set of transition functions $\{\phi_{ij}:U_i\cap U_j\to G\}$. By imposing conditions on $U_i$ and $\phi_{ij}$, one can end up with extra structure on the bundle $\pi:P\to M$ defined by $\{\phi_{ij}\}$.
	
	\begin{definition}\label{defn:howtoconstruct}
		Let $M$ be a $G$-manifold and $\{U_i\}$ a $G$-invariant open cover (i.e., $U_i$ is $G$-invariant for every $i\in\Lambda$). A collection $\{\phi_{ij}:U_i\cap U_j\to G\}$ is called a \textit{$\star$-collection} (or, for short, a $\star$-cocycle) if $\phi_{ij}$ satisfies the cocycle condition \eqref{eq:cocycle} and 
		\begin{equation}\label{eq:conjugation}
		\phi_{ij}(g\cdot x) = g\phi_{ij}(x)g^{-1},
		\end{equation}
		for all $i,j$ and $x\in U_i\cap U_j$.
	\end{definition}
	
	Condition \eqref{eq:conjugation} is just equivariance with respect to conjugation on $G$. Given $\phi_{ij}$, we define the \textit{adjoint map} 
	\begin{align}\widehat{\phi}_{ij} : U_i \cap U_j& \to U_i \cap U_j\nonumber\\
	x& \mapsto \phi_{ij}(x)\cdot x.
	\end{align}
	$\widehat \phi_{ij}$ is a $G$-equivariant diffeomorphism of $U_i\cap U_j$ whenever $\phi_{ij}$ satisfies \eqref{eq:conjugation} (Lemma \ref{lem:reentrance}). 
	
	The core of the paper resides on the next Theorem.

	\begin{theorem}\label{thm:star}
		Let $\pi:P\to M$ be the principal $G$-bundle associated to a $\star$-collection $\{\phi_{ij}:U_i\cap U_j\to G\}$. Then $P$ admits a new action $\star:G\times P\to P$, such that
		\begin{enumerate}[$(i)$]
			\item  $\star$ is free and commutes with the principal action of $\pi$
			\item the quotient $P/\star$ is a $G$-manifold  $G$-equivariantly diffeomorphic to
			\begin{equation*}\label{eq:M'}
			M':= \cup_{{\widehat{\phi}}_{ij}}U_i,
			\end{equation*}
			where the $G$-action on $U_i$ is the restriction of the $G$-action on $M$.
		\end{enumerate}
	\end{theorem}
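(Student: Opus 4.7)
The natural guess is that locally on $U_i\times G$ the $\star$-action should be
\[ g\star(x,h)=(g\cdot x,\,hg^{-1}). \]
I would first check this is the right definition by verifying the two easy conditions. Commutation with the principal action $(x,h)\cdot s=(x,s^{-1}h)$ is immediate: both $g\star((x,h)\cdot s)$ and $(g\star(x,h))\cdot s$ equal $(g\cdot x,\,s^{-1}hg^{-1})$ since the left coordinate does not see $s$ and the right coordinate uses only associativity in $G$. Freeness is equally direct: $g\star(x,h)=(x,h)$ forces $hg^{-1}=h$, hence $g=e$.

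The content of the theorem is that this local formula glues. I would verify well-definedness by tracking a point through both orders of (glue, then $\star$) vs.\ ($\star$, then glue). With $f_{ij}(x,h)=(x,h\phi_{ij}(x))$, the first route yields $(g\cdot x,\,hg^{-1}\phi_{ij}(g\cdot x))$ in chart $j$ while the second yields $(g\cdot x,\,h\phi_{ij}(x)g^{-1})$. Agreement is precisely the condition
\[ \phi_{ij}(g\cdot x)=g\,\phi_{ij}(x)\,g^{-1}, \]
which is \eqref{eq:conjugation}. This is the conceptual heart: equivariance by conjugation is exactly what is needed to globally promote the local $\star$-action to $P$.

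Next, for part $(ii)$, I would exhibit the slice $U_i\times\{e\}\subset U_i\times G$ for the $\star$-action. Given $(x,h)$, the unique $g$ with $hg^{-1}=e$ is $g=h$, and $h\star(x,h)=(h\cdot x,\,e)$; so on each chart the projection $\pi'\colon P\to P/\star$ is given by $(x,h)\mapsto h\cdot x\in U_i$. To determine the transition functions on $P/\star$, I would chase a point on the overlap: $(x,h)_i$ projects to $h\cdot x\in U_i$ in chart $i$, while the equivalent point $(x,h\phi_{ij}(x))_j$ projects to $h\phi_{ij}(x)\cdot x=h\cdot\widehat\phi_{ij}(x)$ in chart $j$. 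Writing $y=h\cdot x$ and invoking the $G$-equivariance of $\widehat\phi_{ij}$ (Lemma \ref{lem:reentrance}), this equals $\widehat\phi_{ij}(y)$. Hence the transition functions for $P/\star$ are the $\widehat\phi_{ij}$ themselves, identifying $P/\star$ with $\cup_{\widehat\phi_{ij}}U_i=M'$.

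Finally, $G$-equivariance is obtained by transporting the descended principal action through the same local identification: $[(x,h)]\cdot s=[(x,s^{-1}h)]\mapsto s^{-1}h\cdot x=s^{-1}\cdot(h\cdot x)$, which is the restriction of the $G$-action on $M$ (in its left-action form). The only step that could cause real trouble is the globalisation in the second paragraph — here the conjugation condition \eqref{eq:conjugation} does all the work, and the rest is bookkeeping; the appeal to $G$-equivariance of $\widehat\phi_{ij}$ in the transition computation is similarly a formal consequence of \eqref{eq:conjugation} recorded in Lemma \ref{lem:reentrance}.
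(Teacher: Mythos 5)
Your proposal is correct and follows essentially the same path as the paper's own proof: same local formula $r(x,h)=(r\cdot x,hr^{-1})$ for the $\star$-action, same verification that gluing is compatible via the conjugation condition \eqref{eq:conjugation}, and the same identification $\pi'(x,h)=h\cdot x$ giving $P/\star\cong M'$ with transition functions $\widehat\phi_{ij}$. The only cosmetic difference is that the paper first invokes Lemma \ref{lem:reentrance} to establish directly that $\{\widehat\phi_{ij}\}$ is a cocycle of equivariant diffeomorphisms (so $M'$ is well defined as a smooth $G$-manifold) before matching it with $P/\star$, whereas you read the cocycle condition off from the existence of the quotient; both routes are fine.
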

	\begin{proof}
		Let $P=\cup_{f_{\phi_{ij}}} U_i\times G$ be as in section \ref{ssec:notation}. We define the $\star$-action as
		\begin{equation}\label{eq:staraction}
		r(x,g)=(r\cdot x,gr^{-1}).
		\end{equation}
		The action is globally well-defined since, for  $x\in U_i\cap U_j$,
		\[(q\cdot x,gq^{-1}\phi_{ij}(qx)) = (q\cdot x,g\phi_{ij}(x)q^{-1}).\]
		The definition of $M'$ implicitly requires the cocycle condition  $\widehat{\phi}_{jk}\widehat{\phi}_{ij}=\widehat{\phi}_{ik}$. Moreover, its smooth $G$-manifold structure requires  $\widehat{\phi}_{ij}$ to be equivariant diffeomorphisms. Next Lemma guarantees both conditions (see also Bredon \cite[page 49]{brebook}).
		\begin{lemma}\label{lem:reentrance}
			Let $U$ be a smooth $G$-manifold and $\theta, \theta': U \to G$ be smooth maps satisfying \eqref{eq:conjugation}. Then $\widehat{\theta},\widehat{\theta'}:U\to U$ are $G$-equivariant diffeomorphisms and $\widehat{\theta\theta'}=\widehat{\theta'}\widehat{\theta}$, where $\theta\theta': U \to G$ is the pointwise multiplication $\theta \theta'(x) = \theta(x)\theta'(x)$. 
		\end{lemma}
		\begin{proof}
			For $x \in U$,
			\begin{equation*}\label{proof:reentrance}(\widehat{\theta'}\widehat{\theta})(x) = \theta'(\theta(x)x)\theta(x)x= \theta(x)\theta'(x)\theta(x)^{-1}\theta(x)x=\theta(x)\theta'(x)x=\widehat{\theta\theta'}(x).\end{equation*}
			In particular, taking $\theta'(x)=\theta(x)^{-1}$, one gets $\widehat{\theta'}=\widehat{\theta}^{-1}$. $G$-equivariance of $\widehat{\theta}$ is straightforward.
		\end{proof}
		To identify $P/\star$ with $M'$, we define  $\pi':P\to M'$ in each  $U_i\times G$ by
		\begin{equation}\label{eq:pi'}
		\pi'(x,g)=g\cdot x.
		\end{equation}
		(compare Cheeger \cite{cheeger}) $\pi'$ is well defined on $P$ since, for all $x\in U_i\cap U_j$, 
		\[\pi'(x,g\phi_{ij}(x))=g\phi_{ij}(x)x=\phi_{ij}(gx)gx=\widehat{\phi}_{ij}(\pi'(x,g)),\]
		Moreover, $\pi'(rp)=\pi'(p)$ for every $p\in P$ and $\iota:U_i\to P$ defined by 
		\begin{equation}\label{eq:iota}
		\iota_i(x)=(x,\id)
		\end{equation} defines a local section for both $\pi$ and $\pi'$. In particular, $\pi'$ is a submersion whose fibers are the $\star$-orbits.	
	\end{proof}

	\subsection{Induced Bundles}
	\label{section:pullback}
	
	Before proceeding to examples, we provide a method to produce new $G$-$G$-bundles out of old ones. As one can see, a $G$-$G$-bundle naturally lies on the category of $G$-spaces and $G$-equivariant maps. Therefore, one could expect that the pullback construction could be carried out by equivariant maps on  the set of $G$-$G$-bundles. Here we provide details of this procedure (compare \cite[section 2]{speranca2016pulling}).
	
	Let $M, N$ be smooth $G$-manifolds and  $f : N \to M$ a smooth  $G$-equivariant map, i.e., $f(g\cdot x) = g\cdot f(x)$ for every $(x,g) \in N{\times} G$. Let $\{\phi_{ij}:U_i\cap U_j\to G\}$ be a $\star$-collection of transition functions  associated to the covering $M=\cup U_i$ and consider $\pi : P \to M$ the $\star$-bundle associated to such collection. We define the \emph{induced bundle} (or \emph{pullback bundle}) $\pi_f : f^{\ast}P \to N$  by
	\begin{equation}\label{eq:f*P}
	f^{\ast}P := \{(x,p) \in N\times P : f(x) = \pi(p)\},
	\end{equation}
	with projection $\pi_f(x,p) := x$ and principal action
	\begin{equation}\label{niceaction} (x,p)s^{-1} := (x,sp).\end{equation}
	Standard arguments shows that $f^*P$ is a submanifold of $N\times P$ and $\pi_f$ is a smooth principal submersion. The $\star$-action can also be pulled back to $\pi_f$. Define
	\begin{equation}\label{eq:inducedstaraction}
	r(x,p) := (r\cdot x,gr^{-1}).
	\end{equation}
	Equation \eqref{eq:inducedstaraction} produces an action on $N\times P$ that leaves $f^{\ast}P$ invariant. $\pi_f$ is clearly equivariant with respect to \eqref{eq:inducedstaraction}.
	
	The induced bundle construction becomes a more interesting construction due to its relation with the original bundle $\pi:P\to N$.
	
	\begin{proposition}\label{prop:howtopullback} Let $f:N\to M$ be a smooth $G$-equivariant map and $P\stackrel{\pi}{\to} M$ the $G$-$G$-bundle associated to $\{\phi_{ij}:U_i\cap U_j\to G\}$. Then,
		\begin{enumerate}[$(i)$]
			\item $\pi_f:f^*P\to N$ is equivariantly isomorphic to  the $G$-$G$-bundle associated to the $\star$-collection $\{\phi_{ij} \circ f : f^{-1}(U_i) \cap f^{-1}(U_j) \to G\}$;
			\item the quotient $f^*P/\star$ is $G$-equivariantly diffeomorphic to  
			\[N' := \cup_{\widehat{\phi_{ij}\circ f}}f^{-1}(U_i);\]
			\item there is a well-defined map $f':N'\to M'$ such that, $f'|_{f^{-1}(U_i)}=f|_{f^{-1}(U_i)}$.
		\end{enumerate}
	\end{proposition}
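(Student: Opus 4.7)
The plan is to reduce all three claims to a direct application of Theorem \ref{thm:star} to the collection $\{\phi_{ij}\circ f\}$, after first verifying that pullback preserves the $\star$-structure. So, one checks that $\{\phi_{ij}\circ f\colon f^{-1}(U_i)\cap f^{-1}(U_j)\to G\}$ forms a $\star$-collection with respect to the $G$-invariant cover $\{f^{-1}(U_i)\}$ of $N$. The cocycle condition \eqref{eq:cocycle} is inherited directly from $\{\phi_{ij}\}$, while the conjugation equivariance \eqref{eq:conjugation} follows from the $G$-equivariance of $f$:
\[(\phi_{ij}\circ f)(g\cdot x)=\phi_{ij}(g\cdot f(x))=g\phi_{ij}(f(x))g^{-1}=g(\phi_{ij}\circ f)(x)g^{-1}.\]

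For item $(i)$, let $\tilde P\to N$ denote the $G$-$G$-bundle associated to $\{\phi_{ij}\circ f\}$ via Theorem \ref{thm:star}, so that $\tilde P=\cup_{f_{\phi_{ij}\circ f}} f^{-1}(U_i)\times G$. I construct the candidate isomorphism $\Psi\colon \tilde P\to f^*P$ chart by chart by setting
\[\Psi_i\colon f^{-1}(U_i)\times G\to f^*P,\qquad \Psi_i(x,g)=(x,[f(x),g]_i),\]
where $[y,g]_i$ denotes the point of $P$ with local representative $(y,g)\in U_i\times G$. Compatibility with the gluings is a short calculation:
\[\Psi_j(x,g\,\phi_{ij}(f(x)))=(x,[f(x),g\,\phi_{ij}(f(x))]_j)=(x,[f(x),g]_i)=\Psi_i(x,g).\]
Equivariance of $\Psi$ for the principal actions (comparing \eqref{niceaction} with the principal action on $\tilde P$) and for the $\star$-actions (comparing \eqref{eq:staraction} with \eqref{eq:inducedstaraction}) then follows by direct evaluation in local coordinates.

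Part $(ii)$ is immediate: Theorem \ref{thm:star}$(ii)$ applied to $\{\phi_{ij}\circ f\}$ gives a $G$-equivariant diffeomorphism $\tilde P/\star\cong N'$, while $(i)$ supplies $\tilde P/\star\cong f^*P/\star$. For part $(iii)$, I define $f'$ piecewise by $f'|_{f^{-1}(U_i)}=f|_{f^{-1}(U_i)}$, which lands in the chart $U_i\subset M'$. To verify well-definedness across the identifications, fix $x\in f^{-1}(U_i)\cap f^{-1}(U_j)$; the gluing in $N'$ identifies $x$ with $\widehat{\phi_{ij}\circ f}(x)=\phi_{ij}(f(x))\cdot x$, and using the $G$-equivariance of $f$,
\[f(\phi_{ij}(f(x))\cdot x)=\phi_{ij}(f(x))\cdot f(x)=\widehat{\phi}_{ij}(f(x)),\]
which is exactly the identification defining $M'$. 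Thus $f'$ descends to a well-defined smooth map $N'\to M'$.

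The only delicate bookkeeping—and the closest thing to an obstacle—is ensuring that the isomorphism $\Psi$ simultaneously intertwines the principal action and the $\star$-action, since these are defined through separate formulas on $f^*P$ and $\tilde P$; both checks are routine from the formulas but must be carried out independently, and one should also confirm that the trivialization of $f^*P$ subordinate to $\{f^{-1}(U_i)\}$ really is the pullback of the trivialization of $P$ over $\{U_i\}$.
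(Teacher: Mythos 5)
Your proposal is correct and follows essentially the same route as the paper: you verify the $\star$-collection axioms for $\{\phi_{ij}\circ f\}$, build the same chart-by-chart identification (the paper writes the inverse map $\Phi_i\colon(x,(f(x),g))\mapsto(x,g)$, you write its inverse $\Psi_i$), deduce $(ii)$ from $(i)$ plus Theorem~\ref{thm:star}, and obtain $(iii)$. The only cosmetic difference is in $(iii)$: the paper descends the $G\times G$-equivariant map $f^*\colon f^*P\to P$ to the quotients and evaluates it on the sections $\iota_i$, whereas you check the gluing compatibility of the piecewise definition directly; both verifications amount to the same computation $f(\phi_{ij}(f(x))\cdot x)=\widehat\phi_{ij}(f(x))$.
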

	\begin{proof}
		The $G$-equivariance of $f$ guarantees the invariance of the sets $f^{-1}(U_i)$ and equivariance of $\phi_{ij}\circ f$:
		\[\phi_{ij}(f(g\cdot x))) = \phi_{ij}(g\cdot f(x)) = g\phi_{ij}(f(x))g^{-1}.\]
		To show that $f^*P$ is the bundle associated to $\{\phi_{ij}\circ f\}$, consider $P=\cup_{f_{\phi_{ij}}} U_i\times G$ and define a map $\Phi_i:\pi_f^{-1}(f^{-1}(U_i))\to f^{-1}(U_i)\times G$ via the expression
		\begin{equation}\label{proof:bijecao}
		(x,(f(x),g))\mapsto (x,g).
		\end{equation}
		The maps $\Phi_i$  clearly patch together to define a $G{\times }G$-equivariant diffeomorphism between $f^*P$ and $\cup_{f_{\phi_{ij}f}} (f^{-1}(U_i){\times }G)$. 
		
		Item $(ii)$  follows from  item $(i)$ and the second item in Theorem \ref{thm:star}.  For item $(iii)$, consider $f^*:f^*P\to P$, $f^*(x,p)=p$. $f^*$ is the map the makes the following \textit{pullback diagram} commutative:
		\begin{equation}
		\begin{xy}
		\xymatrix{f^*P\ar[r]^{f^*}\ar[d]^{\pi_f} & P\ar[d]^\pi \\ N\ar[r]^f & M }
		\end{xy}
		\end{equation}
		One observes that $f^*$ is $G\times G$-equivariant. In particular, it defines a map $f':f^*P/\star\to P/\star$. It follows from \eqref{proof:bijecao} that $f^*(\iota_i(x))=\iota_i(f(x))$, where $\iota_i$ is defined in \eqref{eq:iota}. Therefore, for $x\in f^{-1}(U_i)$,
		\[f'(x)=\pi(f^*(\iota_i(x)))=\pi(\iota_i(f(x)))=f(x).\qedhere\] 
	\end{proof}

	\section{Examples}
	\label{section:examples}
	
	
	
	In this section we provide basic examples of (special) $G$-$G$-bundles. 
	We start with the Hopf fibration $h:S^7\to S^4$ and linear $S^3$-bundles over $S^4$, followed by the bundles in \cite{speranca2016pulling} that realizes  homotopic 8-, 10- and Kervaire spheres (see also \cite{duran2001pointed,DPR,DRS}). 
	
	Examples \ref{ssec:Hopf} and \ref{ssec:GM} can be found in \cite{speranca2016pulling}. The bundles in Wilhelm \cite{wilhelm-lots} also can be described as $G$-$G$-bundles.
	
	\subsection{The Hopf $S^3$-$S^3$-bundle}\label{ssec:Hopf}
	Consider $S^7,S^4$ as  the unitary spheres on the quaternionic plane $\mathbb{H}\times\bb H$ and on $\bb R\times \bb H$, respectively. Define the \textit{Hopf map} $h:S^7\to S^4$ by
	\begin{equation}\label{eq:hopfmap}
	h\begin{pmatrix} x\\y\end{pmatrix}=\begin{pmatrix} |x|^2-|y|^2\\2x\bar y\end{pmatrix}.
	\end{equation}
	Let $S^3$ denote the unitary sphere on $\bb H$ and observe that, for $r,s\in S^3$,
	\begin{equation}\label{eq:hopfactions}
	h\begin{pmatrix}rx\bar s\\ry\bar s\end{pmatrix}=\begin{pmatrix}|x|^2-|y|^2\\r2x\bar y\bar r
	\end{pmatrix}.
	\end{equation}
	Condition \eqref{eq:isotropy} is easily verified (notice that it is sufficient to consider $x\in\bb R$). Therefore $h$ defines an $S^3$-$S^3$-bundle with $\star$-action defined by the $r$-multiplication. Since quaternionic conjugation on $S^7$ interchanges $r$ and $s$, the quotient $S^7/\star$ is diffeomorphic to $S^4$.

	Consider $D^4_\pm=\{(\lambda,x)\in S^4~|~\pm\lambda\geq 0\}$.  Local trivializations of $h$ are given by the maps $\Phi_\pm:D^4_\pm\to S^7$,
	\begin{equation}\label{}
	\Phi_+\begin{pmatrix}\begin{array}{c} \cos (t)\\\sin (t) \xi\end{array},~q \end{pmatrix}=\begin{pmatrix}\cos (t/2)\bar q\\\sin (t/2) \bar\xi\bar q \end{pmatrix},\quad \Phi_-\begin{pmatrix}\begin{array}{c} -\cos (t)\\\sin (t) \xi\end{array}, ~q \end{pmatrix}=\begin{pmatrix}\sin (t/2){\xi}\bar q\\\cos (t/2) \bar q \end{pmatrix},
	\end{equation}
	where $\xi$ is an unitary quaternion. Taking $U_0=D^4_+$ and $U_1=D^4_-$, we have $\phi_{01}:U_0\cap U_1=\{(0,\xi)\in S^4~|\xi\in \bb H\}\to S^3$ as $\phi_{01}(0,\xi)=\xi$. By identifying $U_0\cap U_1=S^3$ (dropping the first coordinate), we denote $\phi_{01}=I:S^3\to S^3$, the identity map.

	\subsection{The Gromoll-Meyer sphere}\label{ssec:GM}
	We recall the definition of the Lie group of quaternionic matrices $Sp(2)$
	\begin{equation}\label{eq:Sp2}
	Sp(2) = \left\{\begin{pmatrix} a & c \\b & d\end{pmatrix}\in S^7\times S^7~ \Big| ~\bar{c}a + \bar{d}b = 0\right\}.
	\end{equation} 
	The projection onto the first column $pr:Sp(2)\to S^7$ is a principal $S^3$-bundle with principal action:
	\begin{equation}\label{eq:GMprincipalaction}
	\begin{pmatrix} 
	a & c \\
	b & d 
	\end{pmatrix}\bar q = \begin{pmatrix}
	a & c\overline{q}\\
	b & d\overline{q}
	\end{pmatrix}.
	\end{equation}
	Gromoll and Meyer \cite{gromoll1974exotic} introduced the $\star$-action 
	\begin{equation}\label{eq:GMstaraction}
	q \begin{pmatrix} 
	a & c \\
	b & d 
	\end{pmatrix} = \begin{pmatrix} 
	qa\overline{q} & qc \\
	qb\overline{q} & qd 
	\end{pmatrix}.
	\end{equation}
	whose quotient is an exotic $7$-sphere, concluding their celebrated result on the existence of an exotic sphere with  non-negative sectional curvature:
	The corresponding action on $S^7$ can be ready from the first column of \eqref{eq:GMprincipalaction}:
	\begin{equation}\label{eq:GMactiondown}
	q\cdot\begin{pmatrix}x\\ y	\end{pmatrix}=\begin{pmatrix}qx\bar q\\ qy\bar q\end{pmatrix}
	\end{equation}
	The $S^3$-$S^3$-bundle defined by \eqref{eq:GMprincipalaction},\eqref{eq:GMstaraction} gives rise to the cross-diagram in Dur\'an \cite{duran2001pointed} which is used to geometrically produce an explicit clutching diffeomorphism $\hat b:S^6\to S^6$ for $\Sigma^7=Sp(2)/\star$:
	\begin{equation}\label{cd:duran}
	\begin{xy}\xymatrix{& S^3\ar@{.}[d]^{\bullet} & \\ S^3\ar@{..}[r]^{\star} &Sp(2)\ar[d]^{pr}\ar[r]^{pr'} &\Sigma^7\\ &S^7&}\end{xy}
	\end{equation}
	In order to produce $\hat b$, the geometry of $\Sigma^7$ is explored through \eqref{cd:duran}. This paper is dedicated to further advance the geometrical/topological relations started in \cite{duran2001pointed}.
	
	As an $S^3$-$S^3$-bundle, $pr$ can be realized as a pullback from $h:S^7\to S^4$. In \cite{speranca2016pulling}, this pullback realization is used to recover the identification of $Sp(2)/\star$ with the Milnor bundle $M_{1,-2}\cong M_{2,-1}$ (as in Gromoll--Meyer \cite{gromoll1974exotic}) which is  an exotic sphere.

	\subsection{Milnor bundles}\label{ssec:milnorbundles}
	The usual boundary map in the long homotopy sequence of the fibration $EG\to BG$, $G=SO(k)$ provides a bijection between the set o linear $S^{k-1}$-bundles over $S^n$ and $\pi_{n-1}SO(k)$. The linear $S^3$-bundles over $S^4$ are usually called Milnor bundles. As in Milnor \cite{mi}, observe that $t_{mn}:S^3\to SO(4)$,
	\begin{equation}
	t_{mn}(x)v=x^mvx^n, \quad v\in\bb H
	\end{equation}
	are representatives of $\pi_3SO(4)\cong \bb Z\oplus \bb Z$. We define $M_{m,n}=D^4\times S^3\cup_{f_{t_{m,n}}}D^4\times S^3$. Milnor observed that $M_{m,n}$ is homeomorphic to $S^7$ whenever $m+n=1$, but not diffeomorphic when $m=2$, for example (see \cite{ek} for a complete classification). On the other hand, the bundles $P_n=M_{0,n}\cong M_{-n,0}$ are $S^3$-principal. We use this section to show that every Milnor bundle can be obtained out of some $P_n$. 
	
	Consider a pair of $S^3$-principal bundles $\pi_k:P_k\to S^4$, $\pi_r:P_r\to S^4$. 
	$\pi_k$ is a $S^3$-$S^3$-bundle with the  $\star$-action
	\begin{equation}\label{eq:actionPk}
	r(x,q)=(rx\bar r,q\bar r).
	\end{equation}
	We consider $P_r$ as the $S^3$-manifold with action
	\begin{equation}\label{eq:actionPr}
	r\cdot (x,q)=(rx\bar r,rq\bar r).
	\end{equation}
	Observe that both \eqref{eq:actionPk} and \eqref{eq:actionPr} commutes with $f_{t_{n,0}}$ for every $n$, defining global actions on $P_n$. Given $\pi_k$, its unique trivialization function is $\phi_{01}(x)=x^k$, where both $U_0$ and $U_1$ are identified with $D^4$, the unit disc on $\bb H$. Consider the $S^3$-$S^3$-bundle given by $\pi_r^*P_k\to P_r$. Each copy $D^4\times S^3\subset P_r$ is invariant with respect to  \eqref{eq:actionPr}. The only transition function of $\pi^*_rP_k$ associated to the cover $\{D^4\times S^3, D^4\times S^3\}$ is $t_{k,0}\circ \pi_r|_{S^3\times S^3}:S^3\times S^3\to S^3$.. 
	
	From Theorem \ref{thm:star}, the resulting manifold is $M'=D^4\times S^3\cup_\psi D^4\times S^3$ where $\psi$ is the composition of $f_{0,r}$ with $(t_{k,0} \pi_r|_{S^3\times S^3})~\widehat{}~$. A straightforward computation gives $\psi=f_{t_{r,k-r}}$. We conclude that the bundle $M_{m,n}$ can be realized by a $S^3$-$S^3$-bundle $P_r\leftarrow P\to M_{m,n}$, where $r=m$.
	One sees that $r$ parametrizes the Euler class and $k$ the third homology of $M_{r,k-r}$ (see Milnor-Stasheff \cite{ms}). It is worth noticing that $(S^4)'=S^4$ in $S^4\leftarrow P_k\to (S^4)'$ and that the map $(\pi_r)':M_{r,k-r}\to S^4$ coincides with the bundle projection $\pi_{(r,k-r)}$.The full diagram is:
	
	\begin{equation}\label{cd:Milnorbundles}
	\begin{xy}\xymatrix@R=9pt@C=7pt@1{  & & S^3 \ar@{..}[dd] & & S^3 \ar@{..}[dd]& \\ &S^3\ar@{..}[rd] & & S^3\ar@{..}[rd]& &\\
		& & \pi_r^*P_k\ar[rd]\ar[rr]\ar[dd] & & P_k\ar[rd]\ar[dd]&\\ & & &M_{r,k-r}\ar@{-}[r] &{~}\ar[r] &S^{4}\\& & P_r\ar[rr]^{\pi_r} & & S^{4} &}\end{xy}
	\end{equation}
	The Gromoll--Meyer sphere happens with $r=1$, $k=-1$ (in this case, $P_1=P_{-1}=S^7$ and $\pi_{1}=-\pi_{-1}=h$ -- it is well known that the pull back of $h$ by $-h$ has total space $Sp(2)$, see \cite{speranca2016pulling,CR1}). 
	
	\begin{remark}
		Another way to define a $S^3$-$S^3$-bundle over $P_r$ is to consider $P_r$ as the $S^3$-manifold with action \eqref{eq:actionPk}. A straightforward computation shows that the resulting diagram is $P_r\leftarrow \pi_r^*P_k\to P_{r-k}$. 
	\end{remark}
	
	\subsection{Other exotic spheres}
	
	In \cite{speranca2016pulling}, $G$-$G$-bundles were used to give geometric presentations of exotic spheres:
	
	\begin{theorem}[\cite{speranca2016pulling}, Theorems 1 and 2]\label{thm:8 10 Kervaire}
		There are explicit (special) $G$-$G$-bundles:
		\begin{enumerate}
			\item $S^8\stackrel{~\pi^{11}}{\longleftarrow}E^{11}\to \Sigma^8$, where $G=S^3$ and $\Sigma^8$ is the only exotic 8-sphere
			\item $S^{10}\stackrel{~\pi^{13}}{\longleftarrow}E^{13}\to \Sigma^{10}$, where $G=S^3$ and $\Sigma^{10}$ is a generator of the order 3 group of homotopy 10-spheres which bound spin manifolds.
			\item $S^{2n-1}\stackrel{~l_n}{\longleftarrow}L_{n}\to \Sigma^{2n-1}$, where $G=O(n)$ and $\Sigma^{2n-1}=\partial P^{2n}(\mathbf{A}^2)$
		\end{enumerate}
	\end{theorem}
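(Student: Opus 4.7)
The plan is to realize each exotic sphere $\Sigma^n$ as the glued manifold $M' = \cup_{\widehat{\phi}_{ij}} U_i$ produced by Theorem~\ref{thm:star}, so that the desired special $G$-$G$-bundle $S^n \leftarrow P \to \Sigma^n$ is furnished automatically. In each case the task reduces to: $(\mathrm{a})$ endowing $S^n$ with a $G$-action compatible with the prescribed isotropy representation; $(\mathrm{b})$ covering $S^n$ by two $G$-invariant collared hemispheres $U_0, U_1$ meeting along a neighborhood of an equatorial $S^{n-1}$; and $(\mathrm{c})$ exhibiting a single transition $\phi_{01} : U_0 \cap U_1 \to G$ satisfying the conjugation-equivariance \eqref{eq:conjugation} (the cocycle condition \eqref{eq:cocycle} being automatic for two charts), whose adjoint $\widehat{\phi}_{01}$ realizes the clutching defining $\Sigma^n$.

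For cases $(1)$ and $(2)$, I would place on $S^n$ the $S^3$-actions prescribed by $(\rho_8)$ and $(\rho_{10})$. The problem reduces to finding a conjugation-equivariant map $\phi_{01} : S^{n-1} \to S^3$ whose homotopy class recovers de Sapio's invariant: a generator of $\sigma(\pi_3 SO(4),\pi_4 SO(3))$ for $\Sigma^8$, and an order-three generator of $\sigma(\pi_3 SO(6),\pi_6 SO(3))$ for $\Sigma^{10}$. Natural candidates are quaternionic polynomial maps built from the Hopf map $h : S^7 \to S^4$ of Section~\ref{ssec:Hopf} composed with suspended quaternionic monomials $\xi \mapsto \xi^k$ or conjugation-type expressions $(\xi,\eta) \mapsto \xi\eta\bar\xi$, each of which is manifestly equivariant under the conjugation $S^3$-action. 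Alternatively, Proposition~\ref{prop:howtopullback} could be used to pull back a suitable $\star$-bundle over $S^4$ (e.g.\ a Milnor bundle $P_k$ from Section~\ref{ssec:milnorbundles}) along a well-chosen equivariant map $f : S^n \to S^4$.

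For case $(3)$, I would use the bi-axial realization $S^{2n-1}\subset \bb R^n\times \bb R^n$ with diagonal $O(n)$-action $A\cdot(x,y) = (Ax, Ay)$, covered by two $O(n)$-invariant neighborhoods of $\{y=0\}$ and $\{x=0\}$. The transition $\phi_{01}$ must be built from the tangent-bundle transition $\tau_{n-1} : S^{n-1} \to O(n)$ so that its adjoint $\widehat{\phi}_{01}$ reproduces the de Sapio element $\sigma(\tau_{n-1},\tau_{n-1})$ representing $\partial P^{2n}(\mathbf{A}_2)$, while simultaneously satisfying the $O(n)$-conjugation-equivariance. This double constraint essentially forces a symmetric combination involving $\tau_{n-1}$ evaluated at both $x/|x|$ and $y/|y|$, together with a compensating conjugation factor that restores equivariance.

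The main obstacle is step $(\mathrm{c})$: verifying that the candidate $\widehat{\phi}_{01}$ is isotopic, in the equivariant category, to the clutching defining $\Sigma^n$ in its standard presentation. For $(1)$ and $(2)$ this amounts to identifying a homotopy class in $\pi_{n-1} S^3$ under the de Sapio correspondence between clutching data and twisted sphere bundles; the novelty over the non-equivariant classification is the conjugation-equivariance constraint, which must be checked directly and may reduce the space of available representatives. For $(3)$ it requires recognizing $\partial P^{2n}(\mathbf{A}_2)$ as an appropriately clutched double of $DTS^n$ and verifying that this presentation intertwines the bi-axial $O(n)$-action with the action inherited from the plumbing, i.e.\ invoking the equivariant analysis of biaxial $O(n)$-actions on homotopy spheres (Bredon~\cite{brebook}). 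Once these isotopies are in hand, conditions \eqref{eq:cocycle} and \eqref{eq:conjugation} are direct, and Theorem~\ref{thm:star} delivers the bundle.
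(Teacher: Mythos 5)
Your overall framework is right: realize $\Sigma^n$ as $M'$ via Theorem~\ref{thm:star}, with the content concentrated in producing a conjugation-equivariant transition on $S^{n-1}$ whose adjoint is the correct clutching. However, the paper does not take the direct two-chart clutching route you propose as your main line. All three bundles are obtained by the pullback construction of Proposition~\ref{prop:howtopullback}, applied not to bundles over $S^4$ but to the Gromoll--Meyer bundle $pr\colon Sp(2)\to S^7$ (for $E^{11}$ and $E^{13}$) and the frame bundle $pr_n\colon O(n{+}1)\to S^n$ (for $L_n$). The crux of the proof is then exhibiting explicit $G$-equivariant maps $f_8\colon S^8\to S^7$, $f_{10}\colon S^{10}\to S^7$ and $J\tau_n\colon S^{2n-1}\to S^n$ — the paper writes these formulas down — and the identification of the $\star$-quotients with the claimed exotic spheres is then deferred to the cited reference.

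There is a genuine gap in your proposal at exactly the point you flag as the ``main obstacle.'' Your candidate transition maps do not work as stated: the Hopf map $h$ lands in $S^4$ not $S^3$, quaternionic monomials $\xi\mapsto\xi^k$ on $S^3$ cannot by themselves give the degree-$7$ and degree-$9$ input needed for $\Sigma^8,\Sigma^{10}$, and the conjugation $(\xi,\eta)\mapsto\xi\eta\bar\xi$ is not defined on the right domain. Your fallback of pulling back a Milnor bundle $P_k\to S^4$ along $f\colon S^n\to S^4$ also fails in isolation: as noted in Section~\ref{ssec:milnorbundles}, the $\star$-quotient of $P_k$ over $S^4$ is $(S^4)'=S^4$, so pulling back $P_k$ by an equivariant $f\colon S^n\to S^4$ only gives $(S^n)'\cong S^n$; one must instead pull back the $G$-$G$-bundle $Sp(2)\to S^7$ (itself a pullback of $h$ by $-h$) along a map into $S^7$ carrying nontrivial clutching data. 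For case $(3)$, your instinct to use $\tau$ evaluated symmetrically in $x$ and $y$ is in the right direction, but the correct map is $J\tau_n(x_1,x_2)=\exp_{e_0}\bigl(\pi\tau_n(x_2/|x_2|)x_1\bigr)$, i.e.\ a Thom--Pontrjagyn collapse, not a symmetric product with a compensating conjugation factor, and the bundle is then $L_n=J\tau_n^* O(n{+}1)$, not a directly clutched $S^{2n-1}$. Finally, the identification of the resulting $M'$ with the specific exotic spheres (the order-$2$ generator in dimension $8$, the order-$3$ spin-bounding generator in dimension $10$, and $\partial P^{2n}(\mathbf{A}_2)$) is nontrivial and is precisely the content of Theorems~1 and~2 of~\cite{speranca2016pulling}; your proposal does not close this step, which is acceptable for a sketch but is the part where the cited theorems are actually used.
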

	
	%
	$\pi^{11}$ and $\pi^{13}$ are pulled back from $pr:Sp(2)\to S^7$ and $l_n'$ from the frame bundle $pr_{n}:O(n{+}1)\to S^{n}$. As in the case of $pr$, we consider $O(n{+}1)$ as a matrix group and $pr_{n}$ as the projection to the matrix's first column. We give a brief description of  $\pi^{11},\pi^{13},l_n$.
	
	Consider $S^8 \subset \mathbb{R}\times \mathbb{H}\times\bb H$ endowed with the action 
	\begin{equation}\label{eq:action8}
	q\cdot \begin{pmatrix}
	\lambda \\
	x\\
	y
	\end{pmatrix} = \begin{pmatrix}
	\lambda\\ 
	qx\\ 
	qy\overline{q}
	\end{pmatrix}.
	\end{equation}
	Observe that $f_8 : S^8 \to S^7$, defined   by
	\begin{equation}
	f_8\begin{pmatrix}
	\lambda\\
	x\\ 
	w
	\end{pmatrix} = \dfrac{1}{\sqrt{\lambda^2 + |x|^4 + |w|^2}}\begin{pmatrix}\lambda + xi\overline{x}\\ w
	\end{pmatrix},
	\end{equation}
	is equivariant with respect to \eqref{eq:action8} and \eqref{eq:GMactiondown}. The bundle $\pi^{11}$ is defined as the  pullback of $pr:Sp(2)\to S^7$ by $f_8$. 
	
	We present $\pi^{10}$ with two different actions. Let $S^{10}\subset \rm{Im}\bb H\times \bb H\times \bb H$ be the $S^3$-manifold with one of the following actions\footnote{Although action (II) is not considered in \cite{speranca2016pulling}, going through the proof of Theorem 1 in \cite{speranca2016pulling}, one verifies that the sphere we get using action (II) is diffeomorphic to $\Sigma^{10}\#\Sigma^{10}\#\Sigma^{10}\#\Sigma^{10}$, which turns to be diffeomorphic to $\Sigma^{10}$.}
	\begin{equation}\label{eq:action10}
	\text{(I)}\quad	q\cdot \begin{pmatrix}
	p\\ 
	w\\ 
	x
	\end{pmatrix} = \begin{pmatrix}
	p\\ 
	qw\\ 
	qx\overline{q}
	\end{pmatrix}, \quad\qquad \text{(II)}\quad q\cdot \begin{pmatrix}
	p\\ 
	w\\ 
	x
	\end{pmatrix} = \begin{pmatrix}
	qp\bar q\\ 
	qw\bar q\\ 
	qx\bar{q}
	\end{pmatrix}.
	\end{equation}
	For the pulling back map, $f_{10}:S^{10}\to S^7$, define the \textit{Blakers-Massey element} $b: S^6 \to S^3$ as in \cite{ADPR,duran2001pointed}:
	\begin{equation}\label{eq:b}
	b(p,w) := \begin{cases}
	\frac{w}{|w|}\exp(\pi p)\frac{\overline{w}}{|w|}, ~\text{if}~w \neq 0,\\
	-1, ~\text{if}~w = 0.
	\end{cases}
	\end{equation}
	Let $\varphi:[0,1]\to[0,1]$ be a smooth non-decreasing function that is the identity on $[\frac{1}{4},\frac{3}{4}]$ and constant near 0 and 1, fixing 0 and 1. Set	
	\begin{align}\label{eq:f10}
	f_{10}\begin{pmatrix}\xi\\w\\x\end{pmatrix}=\begin{pmatrix}\sqrt{1-\varphi(|x|)^2}b\left(\frac{\xi}{\sqrt{|\xi|^2+|w|^2}},\frac{w}{\sqrt{|\xi|^2+|w|^2}}\right)\\\varphi(|x|)\frac{x}{|x|}\end{pmatrix}.
	\end{align}
	Note that $f_{10}$ is equivariant with respect to \eqref{eq:action10} and \eqref{eq:GMactiondown}. Define $\pi^{13}$ is the pullback of  $pr:Sp(2)\to S^7$  by $f_{10}$.
	\begin{remark}
		Action \eqref{eq:action10}-\emph{(I)}, can be extended to the effective $SO(4)=S^3\times S^3/\{\pm(1,1)\}$-action
		\begin{equation}\label{eq:action10-2}
		(q,r)\cdot \begin{pmatrix}
		p\\ 
		w\\ 
		x
		\end{pmatrix} = \begin{pmatrix}
		rp\bar r\\ 
		qw\bar r\\ 
		qx\overline{q}
		\end{pmatrix}
		\end{equation}
		which can be used to realize $\Sigma^{10}=(S^{10})'$ as a $SO(4)$-manifold. To this aim, one can either  observe that $f_{10}$ is invariant with respect to the $r$-coordinate (therefore the clutching diffeomorphism will have the desired equivariance) or consider $Sp(2)\times S^3/\{\pm(1,1)\}$ as an $SO(4)$-$SO(4)$-bundle with the $r$-coordinate acting only on the $S^3$-factor. According to a Conjecture 2 in Straume \cite{straume1994compact}, the $SO(4)$-manifold $\Sigma^{10}$ should be a peculiarly highly symmetric sphere among homotopy spheres that not bound parallelizable manifolds (see \cite{s1}).
	\end{remark}
	
	The $O(n)$-$O(n)$-bundle $\pi_n$ is realized as the pullback of $\rm{pr}_n:O(n{+}1)\to S^n$ by $J\tau_n$ (defined below). Observe that $\rm{pr}_n$ is an $O(n)$-$O(n)$-bundle with $\star$-action given by left multiplication of matrices: consider $O(n)$ acting through the morphism $s_n:O(n)\to O(n{+}1)$  and define $\rm{pr}_n(A)=Ae_0$, where $e_0=(1,0,0,...,0)^T$. The $O(n)$-action 
	\begin{equation}\label{eq:actionSn-Kervaire}
	r\star A=s_n(r)A
	\end{equation}
	is free and satisfies $\rm{pr}_n(rA)=s_n(r) \rm{pr}_n(A)$. An equivariant transition function for $\rm{pr}_n$ is $\tau_n:S^{n-1}\to O(n)$ defined as
	\begin{equation}
	\tau_n(x)v=2\lr{x,v}-v.
	\end{equation} 
	We consider $S^{2n-1}\subset \bb R^n\oplus \bb R^{n}$ and the map $J\tau_n:S^{2n-1}\to S^n$, 
	\begin{align*}
	J\tau_n(x_1,x_2)= \exp_{e_0}\left(\pi\tau_n\Big(\frac{x_2}{|x_2|}\Big)x_1\right).
	\end{align*}
	$J\tau_n$ is equivariant with respect to \eqref{eq:actionSn-Kervaire} and the \textit{biaxial action}
	\begin{equation}\label{eq:actionS2n-1-Kervaire}
	r\cdot\begin{pmatrix}x\\ y\end{pmatrix}=\begin{pmatrix}rx\\ ry\end{pmatrix}.
	\end{equation}
	
	The resulting $O(n)$-$O(n)$-bundle $l_n{:}\,L_n=J\tau_n^*O(n{+}1)\to S^{2n-1}$ gives rise to the manifold $\Sigma^{2n-1}=L_n/\star$. 
	Following \cite{speranca2016pulling}, $\Sigma^{2n-1}$ is homeomorphic to $S^{2n-1}$ for $n$ odd and has $H_{n-1}(\Sigma^{2n-1})\cong \bb Z_3$ when  $n$ is even (as mentioned in the introduction,  $\Sigma^{2n-1}=\partial P^{2n}(A_2)$ in Bredon \cite[Chapter V.9]{brebook}). Next we highlight two phenomena depending on the parity of $n$.
	
	If $n=2m$, $O(2m)$ admits a subgroup $S^1\subset O(2m)$ acting freely on $S^{2n-1}$: consider $S^1$ as the subgroup of block diagonal matrices $diag(A,A,...,A)$, where $A\in SO(2)$ (i.e.,  the exponential of the standard  complex structure of $\bb R^{2m}$). From Theorem \ref{thm:star}, item $(ii)$, the resulting  $S^1$-action is free on $\Sigma^{2n-1}$. The resulting quotient $\Sigma^{4m-1}/S^1$, denoted by $\Sigma P\bb C^{2m-1}$, has $\pi_k \Sigma P\bb C^{2m-1}\cong \pi_k\bb CP^{2m-1}$ for $k\neq 2m$ and $\pi_{2m}\Sigma P\bb C^{2m-1}\cong \bb Z_3$. 
	
	When $n=4m$, one gets a free $S^3$-action by considering $A\in Sp(1)\subset O(4)$ in $diag(A,...,A)$. The resulting quotient ,$\Sigma^{8m-1}/S^3:=\Sigma P\bb H^{2m-1}$, satisfies $\pi_k \Sigma P\bb H^{2m-1}\cong \pi_k\bb HP^{2m-1}$ for $k\neq 4m$ and $\pi_{4m}\Sigma P\bb H^{2m-1}\cong \bb Z_3$. We also observe that there are subgroups $U(m)\subset O(2m)$, $Sp(m)\subset O(4m)$ commuting with the $S^1$-, respectively $S^3$-action, defining $\Sigma P\bb C^{2m-1}$ as a $U(m)$-manifold and $\Sigma P\bb H^{2m-1}$ as a $Sp(m)$-manifold. In section \ref{sec:geometry}, we provide invariant metrics of positive Ricci curvature on both $\Sigma P\bb C^{2m-1}$ and  $\Sigma P\bb H^{2m{-}1}$.
	
	If $n$ is odd, $n=2m{+}1$, we  consider the bundle reduction $\rm{pr}_m^{\bb C}:U(m{+}1)\to S^{2m{+}1}$. Considering $U(m{+}1)\subset O(n+1)$, we take $\rm{pr}_m^\bb C$ as the restriction $\rm{pr}_n|_{U(m{+}1)}$. By observing that both right and left multiplication by $U(m)\subset O(n)$ leaves $U(m{+}1)$ invariant, one concludes that $\rm{pr}_m^\bb C$ is a $U(m)$-$U(m)$-bundle. An equivariant transition map $\tau_m^\bb C:m:S^{2m}\to U(m)$ is presented in \cite{puttmann2003presentations}:
	
	\begin{equation}\label{eq:tauC}
	\tau_m^\bb C\begin{pmatrix}y\\  z\end{pmatrix}= \left(\id -\frac{z}{|z|}(1+e^{\pi y})\frac{\bar z^t}{|z|}\right),
	\end{equation}	
	where $y\in i\bb R$ and $z\in\bb C^m$. 	We can therefore consider A $U(m)$-reduction of $L_n$ can be realized by the pull back of $J\tau^\bb C_m:S^{4m+1}\to S^{2m+1}$,
	\begin{align*}
	J\tau^\bb C_m(x_1,x_2)= \exp\left(\pi\tau^\bb C_m\Big(\frac{x_2}{|x_2|}\Big)x_1\right).
	\end{align*}
	$J\eta_m$ is equivariant with respect to \eqref{eq:actionSn-Kervaire} and the $U(m)$-action defined by restricting  \eqref{eq:actionS2n-1-Kervaire} to $U(m)\subset O(n-1)$. 
	
	If $n=4m+3$, the analogous reduction $Sp(m+1)\subset O(n+1)$ works along the same lines, with transition map $\tau^{\bb H}_m:S^{4m+2}\to Sp(m)$, obtained by replacing $i\bb R$ by $\rm{Im}\bb H$ and $\bb C^m$ by $\bb H^m$ in \eqref{eq:tauC}. 
	
	The advantage of the $U(m)$, $Sp(m)$ realization of $\Sigma^{4m+1}$ is the presence of fixed points, allowing us to perform equivariant connected sums (see section \ref{ssec:connectedsum}).
	
	\section{Connected Sums}\label{ssec:connectedsum}
	Here we realize  diagrams of the form $M\leftarrow f^*P\to M\#\Sigma^n$, where $\Sigma^n$ is an exotic sphere. In  section \ref{sec:geometry}, the diagram is used to study the Ricci curvature of $M\#\Sigma^n$  avoiding the intricate geometry of a connected sum.
	
	
	Let $M$ be a $G$-manifold with a fixed point $p\in M$. Assume  $G$ is compact and that $M$ has a $G$-invariant Riemannian metric. The differential of the action at $p$ induces a morphism $\rho:G\to O(T_pM)$ called the \textit{isotropic representation of $G$ at $p$}. 
	
	On the other hand, we consider $S^n\subset \bb R{\times} T_pM$ as a $G$-manifold with action $g\cdot (\lambda,x)=(\lambda,\rho(g)x)$. Note that  $e_0=(1,0)^T$ is a fixed point. Consider $D_\pm=\{(\lambda, x)\in S^n~|\pm\lambda \geq 0\}$ and $S^{n-1}=D_+\cap D_-$, observing that $D_+,D_-,S^{n-1}$ are $G$-invariant subsets. If $\phi:S^{n-1}\to G$ satisfies the equivariant  condition \eqref{eq:conjugation}, one constructs the $G$-$G$-bundle $P\to S^n$ 
	\begin{equation}\label{proof:connectedsumP}
	P=D_+\times G\bigcup_{f_\phi} D_-\times G,
	\end{equation}
	where $f_\phi(x,g)=(x,g\phi(x))$ and 
	the $\star$-action is defined as in Theorem \ref{thm:star}: $r(x,g)=(r\cdot x,gr^{-1})$.
	Using \eqref{proof:connectedsumP}, Theorem \ref{thm:star} identifies  $(S^n)'$ as the {twisted sphere}   $(S^n)'=D^n\cup_{\hat{\phi}} D^n$. 
	
	As a next step, we pull back $P$ to $M$. The pullback function $f:M\to S^n$ can be obtained along the lines of the Thom--Pontrjagyn construction (see Kosinski \cite{ko}, sections IX.4 and IX.5): let $D_\epsilon$ be an $\epsilon$-disc around the fixed point $p$ such that $\exp_p|_{D_\epsilon}$ is a diffeomorphism. Define $f:M\to S^n$ as
	\begin{equation}\label{eq:fconnected}
	f(x)=\begin{cases} \exp_{e_0}\Big({\pi}\varphi\Big(\frac{|\exp_p^{-1}(x)|}{{\epsilon}}\Big)\frac{\exp_p^{-1}(x)}{|\exp_p^{-1}(x)|}\Big), ~&x\in D_\epsilon\\ -e_0, & x\notin D_\epsilon\,.\end{cases}
	\end{equation} 
	
	As $P$ is trivial along $D_\pm$, Proposition \ref{prop:howtopullback}, item $(i)$, gives
	\begin{equation}
	f^*P=f^{-1}(D_+)\times G\bigcup_{f_{\phi\circ f}} f^{-1}(D_-)\times G=(M-D_{\epsilon/2})\times G\bigcup_{f_{\phi}} D_{\epsilon/2}\times G
	\end{equation}
	where $D_{\epsilon/2}$ is the $\epsilon/2$-disc around $p$ and we identify $S^{n-1}$ with $\partial D_{\epsilon/2}$. Therefore,
	\begin{equation}
	M'=(M-D_{\epsilon/2})\bigcup_{\hat{\phi}} D_{\epsilon/2}.
	\end{equation}
	$M'$ is easily seem to be diffeomorphic to $M\# (S^n)'$. We have:
	
	\begin{theorem}\label{thm:connectedsum}
		Let $M^n$ be  $G$-manifolds and $p\in M$  with isotropic representation $\rho:G\to O(n)$. Then, given a map $\phi:S^{n-1}\to G$ satisfying 
		\[\phi(\rho(g)x)=g\phi(x)g^{-1}, \]
		there is a $G$-$G$-bundle $M\leftarrow P\to M\#\Sigma^n$, where $\Sigma^n=D^n\cup_{\hat \phi}D^n$.
	\end{theorem}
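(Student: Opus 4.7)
The proof essentially executes the construction sketched in the paragraphs preceding the statement, so my plan is to organize that sketch into a clean five-step argument and check the remaining well-definedness issues.

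First I would build the $G$-$G$-bundle over the sphere. Choosing a $G$-invariant inner product on $T_pM$, put the $G$-action $g\cdot(\lambda,x)=(\lambda,\rho(g)x)$ on $S^n\subset\mathbb{R}\times T_pM$, set $D_\pm=\{\pm\lambda\geq 0\}$, and form $P=D_+\times G\cup_{f_\phi}D_-\times G$ with $f_\phi(x,g)=(x,g\phi(x))$. Because $\phi$ satisfies the equivariance condition in \eqref{eq:conjugation} relative to $\rho$, the pair $\{\phi\}$ is a $\star$-collection for the cover $\{D_+,D_-\}$ of $S^n$, so Theorem \ref{thm:star} endows $P$ with a $\star$-action, and identifies $(S^n)'=D^n\cup_{\hat\phi}D^n$, which is exactly the twisted sphere $\Sigma^n$ in the statement.

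Next I would construct the equivariant pullback map $f\colon M\to S^n$. Using the $G$-invariant metric on $M$, pick $\epsilon>0$ so that $\exp_p$ is a diffeomorphism on the disc $D_\epsilon\subset T_pM$, which is $G$-invariant since $G$ fixes $p$ and acts on $T_pM$ by $\rho$. Define $f$ by \eqref{eq:fconnected}: inside $D_\epsilon$ use the composition of $\exp_p^{-1}$ with a smooth radial reparametrization $\varphi$ and the exponential on $S^n$ at $e_0$, and send the complement of $D_\epsilon$ to $-e_0$. Smoothness follows because $\varphi$ is constant near $1$, and $G$-equivariance follows from the Gauss lemma identity $\exp_p\circ\rho(g)=g\cdot\exp_p$ combined with the radial-symmetric nature of the formula. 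Notice that $f^{-1}(-e_0)$ and $f^{-1}(e_0)$ are respectively the complement of $D_\epsilon$ and the point $p$.

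Now I apply Proposition \ref{prop:howtopullback}. The preimages $f^{-1}(D_+)$ and $f^{-1}(D_-)$ form a $G$-invariant open cover of $M$ (after a tiny thickening), and the pulled-back $\star$-cocycle is $\phi\circ f$ restricted to $f^{-1}(D_+)\cap f^{-1}(D_-)$. Since $P$ is trivial over each of $D_\pm$, the induced bundle $f^*P$ is explicitly
\[
f^*P=(M\setminus D_{\epsilon/2})\times G\;\bigcup_{f_{\phi\circ f}}\;D_{\epsilon/2}\times G,
\]
after identifying $f^{-1}(\partial D_+)$ with $\partial D_{\epsilon/2}\cong S^{n-1}$ via $\exp_p^{-1}$ (on this sphere $f$ is, up to the equivariant diffeomorphism induced by $\rho$, the identity to $S^{n-1}\subset S^n$, so $\phi\circ f$ is equivalent as a $\star$-cocycle to $\phi$). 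By part (ii) of Proposition \ref{prop:howtopullback},
\[
M'=(M\setminus D_{\epsilon/2})\;\bigcup_{\hat\phi}\;D_{\epsilon/2}.
\]

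The final step is to identify $M'$ with $M\#\Sigma^n$. Writing $\Sigma^n=D^n\cup_{\hat\phi}D^n$ and removing an open disc from one of the two copies of $D^n$ produces $D^n$ glued to $\partial D^n\times[0,1]$, which is just $D^n$ again; hence forming the connected sum $M\#\Sigma^n$ along the disc $D_{\epsilon/2}$ in $M$ amounts to excising $D_{\epsilon/2}$ and regluing $D^n$ along $S^{n-1}$ via $\hat\phi$, which is the right-hand side of the displayed formula. The collaring argument recalled in section \ref{ssec:notation} guarantees that the smooth structure on this identification depends only on the isotopy class of $\hat\phi$, making $M'$ diffeomorphic to $M\#\Sigma^n$. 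The main subtlety I expect is precisely this last identification of the twisted manifold with a connected sum: one must be careful that the gluing on $S^{n-1}$ via $\hat\phi$ (which is $G$-equivariant and isotopic to the identity when restricted to appropriate collars) produces the standard connected-sum smooth structure rather than some other twisted version, but this is handled by the isotopy invariance of connected sums together with the collaring theorem.
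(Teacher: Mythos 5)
Your proposal follows the paper's own proof essentially step by step: build the $G$-$G$-bundle $P\to S^n$ from the $\star$-cocycle $\phi$, identify $(S^n)'=D^n\cup_{\hat\phi}D^n$ via Theorem \ref{thm:star}, pull back along the Thom--Pontrjagyn map $f$ of \eqref{eq:fconnected}, and apply Proposition \ref{prop:howtopullback} to conclude $M'=(M\setminus D_{\epsilon/2})\cup_{\hat\phi}D_{\epsilon/2}\cong M\#\Sigma^n$. One small caveat: in your last paragraph you parenthetically describe $\hat\phi$ as ``isotopic to the identity when restricted to appropriate collars,'' which would force $\Sigma^n\cong S^n$; what you actually need (and use) is only that the connected-sum smooth structure depends just on the isotopy class of $\hat\phi$, not that this class is trivial.
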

	
	\begin{remark}Following Lemma \ref{lem:reentrance}, of $\phi$ satisfies the hypothesis on Theorem \ref{thm:connectedsum}, so does $\phi^k$, for every $k\in \bb Z$. Since $\#_k\Sigma^{n}=D^n\cup_{\widehat{\phi}^k}D^n$, the resulting $M'$ gives the $k$-fold connected sum $M\#\Sigma^n\#...\#\Sigma^n$.
	\end{remark}	
	\begin{remark}
		One can use the Thom--Pontrjagyn to provide a more general construction: suppose $N^k\subset M^{n+k}$ is a $G$-invariant manifold and $F:\bb R^n\times N\to M$ is a frame of $\nu N$ such that $G$ satisfies $gF(v,x)=F(\rho(g)v,gx)$ for some linear representation $\rho:G\to O(n)$. One considers the  $f:M\to S^{n+1}$,
		\begin{equation}\label{eq:fpair}
		f(z)=\begin{cases} \exp_{e_0}\Big({\pi}\varphi(|v|)\frac{v}{|v|}\Big), ~&z=F(x,v)\\ -e_0, & z\notin F(\bb R^k\times N)\,.\end{cases}
		\end{equation} 
		Given a function $\phi: S^{n-1}\to G$ as in Theorem \ref{thm:connectedsum}, one can again consider the $G$-$G$-bundle \eqref{proof:connectedsumP} and its pull-back $f^*P$. The resulting manifolds, $M'$, resembles Bredon's pairing \cite{bre1} as a version of $M$ `twisted' along the boundary of a tube around $N$. Examples based on this construction will be presented elsewhere.
	\end{remark}

	It follows  that  there are functions $\phi:S^{n-1}\to G$ realizing  $P=D^n\times G\cup_{f_\phi} D^n\times G$ and, therefore, $\Sigma^n=D^n\cup _{\widehat \phi}D^n$, where  $P$ is $E^{11},~E^{13}$ and $L_{2m+1}$ in Theorem \ref{thm:8 10 Kervaire} (Proposition \ref{prop:fibradonormal} or Theorem 4.2 and Proposition 4.4 in \cite{speranca2016pulling}). Explicit formulas for $\phi$ in the case of $E^{11}$ and $E^{13}$ are presented in \cite[Theorem 4.6]{speranca2016pulling}. For $b:S^6\to S^3$ in \ref{eq:b}, $\Sigma^7=D^7\cup_{\hat b} D^7$ is a generator of the group of homotopy 7-spheres (\cite{ADPR,duran2001pointed}). 
	
	Observe that the representations $(\rho_n)$ in the introduction are related to the fixed points of the $S^3$-manifolds $\Sigma^7,\Sigma^8,\Sigma^{10}$, the $U(m)$-manifold $\Sigma^{4m+1}$ and the $Sp(m)$-manifold $\Sigma^{8m+5}$, respectively. To realize the examples in Theorem \ref{thm:main} we just need to observe that $M^n$  in Theorem \ref{thm:main} has a $G$-action carrying a fixed point with isotropy representation $(\rho_n)$.
	
	%
	
	The reducibility of the representations $(\rho_7)$-$(\rho_{8m+5})$ allows us to explore $G$-manifolds which are locally products, such as (trivial and non-trivial) bundles. In what follows, we list some examples. As `building blocks', we use  homogeneous manifolds such as $S^n$  and projective spaces $\bb CP^m,\bb HP^m$. If $H/K$ is an homogeneous manifold, the pair $(H/K,\rho')$ is to be interpreted as the manifold $H/K$ endowed with the $G$-action  induced by $\rho'(G)\subset K$. Thus, $\rho'$ is the isotropy representation of the $G$-manifold $H/K$ at the `{base point}'  $K\in H/K$. In general, $(N,\rho')$ will denote a manifold that posses a fixed-point with isotropic representation $\rho'$. For the representations, we use the notation in the list $(\rho_7)$-$(\rho_{8m+5})$.
	
	\subsubsection{Product manifolds}
	\begin{enumerate}
		\item $(S^3,\rho_1)\times (S^4,\rho_1\oplus \rho_0)$
		\item $(S^3,\rho_1)\times (S^3,\rho_1)\times (N^1,\rho_0)$
		\item $(S^6,\rho_1\oplus \rho_1)\times (N^1,\rho_0)$
		\item $(S^3,\rho_1)\times (S^5,\rho_{\h}\oplus \rho_0)$
		\item $(S^3,\rho_1)\times (S^4,\rho_{\h})\times (N^1, \rho_0)$
		\item $(S^7,\rho_\h\oplus \rho_1)\times (N^1,\rho_0)$
		\item $(N^8,\rho_\h\oplus \rho_1\oplus\rho_0)\times (N^2,2\rho_0)$
		\item $(S^{2m},\rho_{U(m)})\times (S^{2m+1},\rho_{U(m)}\oplus\rho_0)$
		\item $(S^{4m},\rho_{U(m)}\oplus\rho_{U(m)})\times (N^1,\rho_0)$
		\item $(\bb CP^m,\rho_{U(m)})\times (S^{2m+1},\rho_{U(m)}\oplus\rho_0)$ 
		\item $(\bb CP^m,\rho_{U(m)})\times (S^{2m},\rho_{U(m)})\times (N^1,\rho_0)$ 
		\item $(\bb CP^m,\rho_{U(m)})\times (\bb CP^{m},\rho_{U(m)})\times (N^1,\rho_0)$ 
		\item $\left(\frac{U(m+2)}{SU(2)\times U(m)},\rho_{U(m)}\oplus\rho_{U(m)}\oplus\rho_0\right)$
		\item $\left(\frac{U(m+2)}{U(2)\times U(m)},\rho_{U(m)}\oplus\rho_{U(m)}\right)\times (N^1,\rho_0)$
		\item $(S^{4m},\rho_{Sp(m)})\times (S^{4m+5},\rho_{Sp(m)}\oplus 5\rho_0)$
		\item $(S^{8m},\rho_{Sp(m)}\oplus\rho_{Sp(m)})\times (N^5,5\rho_0)$
		\item $(\bb HP^m,\rho_{Sp(m)})\times (S^{4m+l},\rho_{Sp(m)}\oplus l\rho_0)\times (N^{5-l},(5-l)\rho_0)$, for $0\leq l\leq 5$ 
		\item $(\bb HP^m,\rho_{Sp(m)})\times (\bb HP^{m},\rho_{Sp(m)})\times (N^5,5\rho_0)$ 
		\item $\left(\frac{Sp(m+2)}{Sp(2)\times Sp(m)},\rho_{Sp(m)}\oplus\rho_{Sp(m)}\right)\times (N^5,5\rho_0)$
	\end{enumerate}	
	
	\begin{remark}
		As mentioned, $T_1S^{2m+1}\# \Sigma^{4m+1}\cong T_1S^{2m+1}$ (see de Sapio \cite{DeS}).
	\end{remark}
	
	\subsection{Bundles over spheres}
	To consider actions on bundles, we rely on explicit equivariant expressions of transition functions, such as \eqref{eq:b}.
	
	\subsubsection{Milnor bundles} A family of examples are given by the Milnor bundles:  action \eqref{eq:actionPr}  is well defined in every $M_{m,n}$ and  $(0,1)^T$ is a fixed point with isotropy representation $(\rho_7)$.
	
	\subsubsection{Explicit non-linear $S^6$-bundles over $S^1$} Given a diffeomorphism $h:S^6\to S^6$, a $S^6$-bundles over $S^1$ is defined by
	\begin{equation}
	E_h=[0,1]\times S^6\cup_{f_h}[0,1]\times S^6
	\end{equation}
	where $f_h:\{0,1\}\times S^6\to \{0,1\}\times S^6$ is defined by $f_h(0,x)=(0,x)$, $f_h(1,x)=(1,h(x))$. There are interesting choices we can make for $h$: by taking $h=\alpha$, the antipodal map in $S^6$ one gets the non-trivial linear $S^6$-bundle  over $S^1$. Moreover, \cite{ADPR} provides the family $S^3$-equivariant  fixed-point free involutions $\theta_k=\alpha \hat b^k:S^6\to S^6$. It is also proved in \cite{ADPR} that $\alpha=\theta_0,...\theta_{27}$ parametrizes the 28 connected components of $\rm{Diff}^-(S^6)$, the set of orientation-reversing diffeomorphisms of $S^6$. 	Therefore, $E_k=E_{\theta_k}$ is isomorphic to a linear bundle if and only $k\equiv0\mod 28$. 
	
	One can define $E_k$ as an $S^3$-manifold using action \eqref{eq:GMactiondown} on $S^6$ (taking $S^6=S^7\cap \{\Re(x)=0\}$). It has a fixed point ($(0,(0,1)^T)$ in any copy of $[0,1]\times S^6$) with isotropy representation $(\rho_7)$. The authors don't known whenever $E_k$ is diffeomorphic to $E_0$ or not.
	
	The advantage of using $\theta_k$ instead of the orientation-preserving $\hat b^k$ is that $\theta_k$, being a fixed-point free involution, defines a free $\bb Z_2$-action on $E_k$. The quotient is the product of a \textit{fake projective plane} $F\bb RP^6$ and $S^1$.
	
	Although the observations above does provide $S^3$-$S^3$-bundles $E_k\leftarrow P\to E_k\#\Sigma^7$, we are not able to give any new relevant information about $E_k\#\Sigma^7$ since the geometry of $E_k$ is itself unknown to the authors (for instance, $E_k$ can't have positive Ricci curvature since it has infinite fundamental group). One might believe that $E_k\cong E_0\#_k\Sigma^7$. In this case, there is a cross-diagram $E_0\leftarrow P\to E_k$ and, possibly, something could be said about $E_k$.
	
	
	
	
	
	
	\subsection{8-dimensional bundles}
	Linear $S^3$-bundles over $S^5$ and $S^4$-bundles over $S^4$ are parametrized by $\pi_4SO(4)\approx\bb Z_2+\bb Z_2$ and $\pi_3SO(5)\approx\bb Z$, respectively. Define:
	\begin{align}
	\eta:S^4&\to S^3 \\
	\begin{pmatrix}\lambda\\ x \end{pmatrix} &\mapsto \frac{\lambda +xi\bar x }{|\lambda +xi\bar x |}\nonumber
	\end{align}
	Then $\eta_L=\rho_L\circ\eta$ and $\eta_R=\rho_R\circ \eta$ are generators for $\pi_4SO(4)$, where $\rho_L,\rho_R$ denotes the 4-dimensional representation of $S^3$ defined by left, respectively right, multiplication by quaternions. $\rho_1\oplus 2\rho_0=I_5:S^3\to SO(5)$ generates $\pi_3SO(5)$. Note that  $\eta_L,\eta_R$  are equivariant with respect to $\rho_1\oplus 2\rho_0$ and $I_5(qx\bar q)=I_5(q)I_5(x)I_5(q)^{-1}$. We get the bundles
	\begin{equation}
	P_{\eta_\epsilon}=D^5\times S^3\bigcup_{f_{\eta_\epsilon}}D^5\times S^3,\qquad P_{I_5^k}=D^4\times S^4\bigcup_{f_{I_5^k}}D^4\times S^4,
	\end{equation}
	where $\epsilon=L,R$ and $I_5^k(q)=I_5(q)^k$. Both bundles admit the actions 
	\begin{equation}
	q\cdot \left(\begin{array}{c}\lambda \\ x\end{array}, g\right)=\left(\begin{array}{c}\lambda \\ qx\end{array}, qg\bar q\right),\qquad q\cdot \left(\begin{array}{c}\lambda_1 \\ x_1\end{array},\begin{array}{c}\lambda_2 \\ x_2\end{array} \right)=\left(\begin{array}{c}\lambda_1 \\ qx_1\bar q\end{array}, \begin{array}{c}\lambda_2 \\ qx_2\end{array}\right).
	\end{equation}
	Therefore $P_{\eta_\epsilon},P_{I_5^k}$ have fixed points with isotropy representation $(\rho_8)$.
	
	\begin{remark}
		The analogous bundle $P_{\eta_L\eta_R}$, whose transition function is given by the product $\eta_L\eta_R(z)=\eta_L(z)\eta_R(z)$ and $P_{I_5^k}$, $k\neq 0\mod 2$ are stabilized by $\Sigma^8$, i.e., $M\#\Sigma^8\cong M$, for $M=P_{\eta_L\eta_R},P_{I^k_5}$ (see de Sapio \cite{de1969manifolds} -- recall that $\Sigma^8=\sigma_{4,3}(\eta_L\eta_R,I_5)$ and that $\theta^8\approx \bb Z_2$). If $k$ is even, $P_{I_5^k}\#\Sigma^8\ncong P_{I_5^k}$ (see de Sapio \cite{de1969manifolds}). The authors does not known whenever $P_{\eta_\epsilon}$ is stabilized or not by $\Sigma^8$.
	\end{remark}
	
	\begin{remark}
		Another $S^3$-manifold with istropy  $(\rho_8)$ is $\bb HP^2$ with an $S^3=Sp(1)$-action derived from a suitable subgroup of $Sp(2)\times Sp(1)$,  its standard isotropic representation. However, the promising manifold $\bb HP^2\#\Sigma^8$ is diffeomorphic to $\bb HP^2$ (as pointed out to the first author by D. Crowley -- see Kramer--Stolz \cite{kramer2007} for a reference). 
	\end{remark}
	
	\subsubsection{10 dimensional bundles}
	The case of $S^3$-bundles over $S^7$ is of special interest. We deal with the principal case for simplicity. It is known that $b:S^6\to S^3$, defined in \eqref{eq:b}, is a generator of $\pi_6S^3$ (it is obtained as an explicit \textit{clutching map} of $pr:Sp(2)\to S^7$ in \cite{duran2001pointed}). It admits the following $SO(4)$ symmetry:
	\begin{equation}
	b\hspace{-1.5pt}\begin{pmatrix}rp\bar r\\ qw\bar r\end{pmatrix}=qb\hspace{-1.5pt}\begin{pmatrix}p\\ w\end{pmatrix}\bar q.
	\end{equation}
	Therefore, the map $f_b:S^6\times S^3\to S^6\times S^3$ is equivariant with respect to the $(S^3)^3$-action:
	\begin{equation}
	(q,r,s)\cdot \left(\begin{array}{c}p\\w \end{array}, g\right)=\left(\begin{array}{c}rp\bar r\\qw\bar r \end{array}, sg\bar q\right),
	\end{equation}
	defining $P_{b^k}=D^7\times S^3\cup_{f_{b^k}}D^7\times S^3$ as a $(S^3)^3$-manifold. $P_{b^k}$ satisfies $(c)$ by restricting the action to the diagonal $\{(r,r,r)~|~r\in S^3\}$. A generic linear $S^3$-bundles is obtained using the transition map $L_{b^n}\circ R_{b^m}$. Denote this bundle by $P^{10}_{m,n}\to S^7$. Since $f_{L_{b^n}\circ R_{b^m}}$ is still equivariant with respect to the diagonal action,  $P^{10}_{m,n}$ satisfies $(c)$.
	
	\begin{remark}
		Recalling from \cite{speranca2016pulling} that $\Sigma^{10}=\sigma_{3,6}(I_6,L_b\circ R_b)$, one concludes that $P_{b^k}\#\Sigma^{10}\cong P_{b^k}$ if and only if $k\neq 0\mod 3$ (see de Sapio \cite[Theorem 1]{de1969manifolds}). The authors were not able to find out whether $P_{m,n}^{10}\#\Sigma^{10}$ is diffeomorphic to $P_{m,n}^{10}$ or not. 
	\end{remark}
	\begin{remark}
		The diffeomorphism $\widehat{\Theta}_{10}$ in \cite[Theorem 4.6]{speranca2012Phd} provides an explicit representative of a class in $\pi_3(\rm{Diff}^+(S^6))$ not represented by $\pi_3(O(7))$. More specifically, it furnishes a non-linear $S^6$-bundle over $S^4$ whose total space is an $S^3$-manifold with a fixed point with isotropy $(\rho_{10})$. Note that the works of Nash \cite{nash1979positive} or Poor \cite{poor1975some} does not provide positive Ricci on this bundle (supposing its total space is not diffeomorphic to a known space).
	\end{remark}

	\section{The geometry of cross-diagrams}
	\label{sec:geometry}
	
	Let $M\stackrel{\pi}{\leftarrow }P\stackrel{\pi'}{\to} M'$ be a $G$-$G$-bundle. An efficient way to compare geometries in $M$ and $M'$ is to endow $P$ with a $G\times G$-invariant metric. In this case, the space $\cal H''\subset TP$, orthogonal to the $G\times G$-orbits on $P$, descends isometrically to both $\cal H$ and $\cal H'$, the spaces orthogonal to the $G$-orbits on $M$ and $M'$, respectively.
	
	In what follows, we provide more details about existence of $G\times G$-invariant metrics on $P$ and explore the transversal geometry induced by the isometries $\cal H\stackrel{d\pi}{\leftarrow}\cal H''\stackrel{d\pi'}{\to}\cal H'$. We suppose  $G$ compact with a bi-invariant metric originating from an adjoint invariant inner product $Q$ on $\lie g$. 
	
	There are three different ways to endow $P$ with a $G\times G$-invariant metric:
	\begin{enumerate}
		\item Since $G\times G$ is compact, averaging any initial metric $g_0$ on $P$ gives an invariant metric $g$ (see Bredon \cite{brebook} for reference)
		\item A more concrete metric can be obtained our examples: given a map $f:N\to M$, $f^*P$ is naturally a submanifold of $N\times P$ (equation \eqref{eq:f*P}). If $M$ and $P$ are equipped with a $G$-invariant and a $G\times G$-invariant metric, respectively, the induced metric on $f^*P$ is $G\times G$-invariant. Most of our examples are the pullback of either $pr:Sp(2)\to S^7$ or the frame bundle $pr_n:O(n+1)\to S^n$, which admit natural $G\times G$-invariant metrics
		\item Given a connection 1-form $\omega:TP\to \lie g$ and a metric $g_M$ on $M$, one can endow $P$ with the \emph{Kaluza-Klein metric} 
		\begin{equation}\label{eq:kk}
		\langle X, Y\rangle := g_M(d\pi X, d\pi Y) + Q(\omega(X),\omega(Y)). 
		\end{equation}		If both $g_M$ and $\omega$ are $G$-invariant (in the sense that $\omega_{gp}(gX)=\omega_p(X)$ for all $X\in TP$) then, $\lr{,}$ is $G\times G$-invariant
	\end{enumerate}
	
	We focus on $(3)$ and prove:
	\begin{proposition}\label{prop:existence-invariant-metric}
		There exists a connection 1-form $\omega:TP\to \lie g$ such that
		\[\omega_{rp}(rX)=\omega_p(X)\]
		for all $X\in TP$ and $r\in G$. Moreover, if $g_M$ is a $G$-invariant metric, then the metric \eqref{eq:kk} is $G\times G$-invariant. 
	\end{proposition}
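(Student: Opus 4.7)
The plan is to produce $\omega$ by averaging an arbitrary connection form over the compact group $G$ acting via $\star$, then check that both the connection axioms and the $G$-invariance of the Kaluza-Klein construction survive the averaging. The key structural fact that makes the argument go through is that the $\star$-action commutes with the principal action (Theorem \ref{thm:star}, item $(i)$).

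First I would pick any connection 1-form $\omega_0$ on $P$, which exists by the standard partition-of-unity argument over a trivializing cover. Using compactness of $G$ and its normalized Haar measure $dr$, define
\[ \omega_p(X) := \int_G (\omega_0)_{rp}(rX)\, dr. \]
The $\star$-invariance $\omega_{sp}(sX) = \omega_p(X)$ is immediate from left-invariance of $dr$ under the substitution $r \mapsto rs^{-1}$. To verify the connection axioms, note that because the principal action commutes with the $\star$-action, for the fundamental vector field $Y^*$ of $Y \in \lie g$ one has $r_*(Y^*_p) = Y^*_{rp}$; hence the integrand is identically $Y$, giving $\omega_p(Y^*_p) = Y$. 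The equivariance $R_g^*\omega = \Ad(g^{-1})\omega$ under the principal action is likewise pulled outside the integral by the same commutation, using the $\Ad$-equivariance of $\omega_0$.

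For the Kaluza-Klein metric \eqref{eq:kk}, invariance under the principal action is standard: $d\pi$ kills the vertical part so the $g_M$-term is trivially preserved, and the $Q$-term is preserved because $Q$ is $\Ad$-invariant while $\omega$ transforms by $\Ad$. Invariance under the $\star$-action uses $G$-equivariance of $\pi$ together with $G$-invariance of $g_M$ for the $g_M$-term, and the $\star$-invariance of $\omega$ built in by construction for the $Q$-term. There is no real obstacle here; the only delicate point is that the commutation of the two $G$-actions is precisely what allows the averaging to preserve both connection axioms simultaneously while producing the desired extra invariance.
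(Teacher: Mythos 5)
Your proof is correct and follows essentially the same route as the paper: average an arbitrary connection form over the $\star$-action using Haar measure, then use the commutation of the two $G$-actions to see that the connection axioms and the desired invariances all survive. The paper simply states that these verifications are "immediate," while you spell them out; your only (harmless) imprecision is attributing the change-of-variables step to left-invariance of Haar measure when it is in fact right-invariance that is needed, though for compact $G$ the measure is bi-invariant anyway.
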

	
	It follows immediately:
	
	\begin{corollary}
		Let $M\leftarrow P\to M'$ be a $G$-$G$-bundle. Then there is a $G$-invariant metric on $M'$ such that $M/G$ and $M'/G$ are isometric as metric spaces.
	\end{corollary}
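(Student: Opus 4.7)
The plan is a direct application of Proposition \ref{prop:existence-invariant-metric} followed by a two-step Riemannian submersion argument. First, I would invoke Proposition \ref{prop:existence-invariant-metric} to obtain, starting from the given $G$-invariant metric $g_M$ on $M$, a Kaluza--Klein metric $g_P$ on $P$ of the form \eqref{eq:kk} which is invariant under the full $G\times G$-action (principal action of $\pi$ together with the $\star$-action). By construction, $\pi\co (P,g_P)\to(M,g_M)$ is a Riemannian submersion whose fibers are the principal $G$-orbits.

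Next, since $g_P$ is in particular invariant under the $\star$-action (i.e.\ the principal action of $\pi'$), there is a unique metric $g_{M'}$ on $M'$ making $\pi'\co (P,g_P)\to (M',g_{M'})$ a Riemannian submersion: on each tangent space one declares $d\pi'$ to be an isometry between the horizontal space $(\ker d\pi')^\perp$ and $T_{\pi'(p)}M'$. The residual $G$-action on $M'$ is induced by the principal action of $\pi$, which commutes with $\star$ and preserves $g_P$; consequently it descends to an isometric action on $(M',g_{M'})$, so $g_{M'}$ is $G$-invariant as required.

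For the isometry of orbit spaces, consider the diagram of orbit-distance quotients
\[
\begin{xy}\xymatrix{ M/G & P/(G\times G)\ar[l]\ar[r] & M'/G.}\end{xy}
\]
Both arrows are bijections of sets because the two towers $P\to M\to M/G$ and $P\to M'\to M'/G$ realize the $(G\times G)$-orbit space in two different orders. The remaining task is to show that each of these bijections is distance-preserving. This is the standard fact that the orbit distance on $N/K$ induced by a Riemannian submersion $N\to N/K$ with compact structure group $K$ coincides with the quotient metric: applied twice (first to the principal bundle $\pi$, then to the descended $G$-action on $M$), it shows that the orbit distance on $M/G$ equals the $(G\times G)$-orbit distance on $P$ computed from $g_P$; symmetrically for $M'/G$. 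Hence the two metrics pulled back to $P/(G\times G)$ agree, and $M/G$ and $M'/G$ are isometric as metric spaces.

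The main obstacle is mild: it is simply the verification that successive quotients by two commuting compact actions produce the same metric as the simultaneous quotient by the product action. This is immediate from the infimum definition of orbit distance once one observes that horizontal curves in $P$ for the combined $G\times G$-action are precisely those that are $g_P$-horizontal for both $\pi$ and $\pi'$, so any distance-minimizing curve in one tower lifts and reprojects into a curve of the same length in the other.
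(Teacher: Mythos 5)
Your proposal is correct and follows essentially the same route as the paper: invoke Proposition~\ref{prop:existence-invariant-metric} to get a $G\times G$-invariant Kaluza--Klein metric on $P$, descend via the two Riemannian submersions $\pi,\pi'$ to $G$-invariant metrics on $M$ and $M'$, and identify both orbit-distance spaces $M/G$ and $M'/G$ with $P/(G\times G)$. The paper leaves the iterated-quotient step implicit (``It follows immediately''); your elaboration of why successive quotients by the two commuting free/compact actions give the same metric space as the single $G\times G$-quotient is exactly the verification the paper is tacitly using.
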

	
	
	First observe there are bijections between the set of orbits of $M$, $M'$ and $P$: if $x=\pi(p)$ and $x'=\pi'(p)$, then $\pi^{-1}(Gx)=(G\times G)p$ and $\pi'((G\times G)p)=Gx'$. The choice of $p\in \pi^{-1}(x)$, $x\in Gx$ and $x'\in Gx'$ are  irrelevant. Moreover,  if $\gamma:\bb R\to M$ is a geodesic orthogonal to orbits in $M$, the identification $\cal H\leftarrow \cal H''\to \cal H'$ sends $\gamma$ to a geodesic orthogonal to orbits in $M'$. We make it explicit in the following proposition (see Proposition 4.4 in \cite{speranca2016pulling} for the case of a fixed point $x$).
	
	\begin{proposition}\label{prop:fibradonormal}
		There is an isomorphism $\Phi:\nu Gx\to \nu Gx'$ satisfying:  if $\cal O\subset \nu Gx$ is such that $\exp|_\cal O$ is a diffeomorphism, then $\exp|_{\Phi(\cal O)}$ is a diffeomorphism. 
	\end{proposition}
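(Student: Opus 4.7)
The plan is to make explicit the identification $\cal H\leftarrow\cal H''\to\cal H'$ already alluded to in the paragraph preceding the proposition, and to extend it from tangent spaces to exponentiated tubular neighborhoods, working throughout with a $G\times G$-invariant metric on $P$ as in Proposition \ref{prop:existence-invariant-metric}. First I would pick the base point carefully: by \eqref{eq:isotropy} there is $g\in G$ with $(G\times G)_p=\{(h,ghg^{-1}):h\in G_x\}$, and replacing $p$ by $pg$ along the principal action makes its isotropy diagonal, $\{(h,h):h\in G_x\}$. Setting $x'=\pi'(p)$, the description of the $G$-action on $M'$ then gives $G_{x'}=G_x$ as subgroups of $G$. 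Since $d\pi_p$ and $d\pi'_p$ restrict to $G_x$-equivariant linear isometries $\cal H''_p\to\cal H_x=(\nu Gx)_x$ and $\cal H''_p\to\cal H'_{x'}=(\nu Gx')_{x'}$, I set
\[
\Phi_x:=d\pi'_p\circ(d\pi_p|_{\cal H''_p})^{-1}\colon(\nu Gx)_x\to(\nu Gx')_{x'}
\]
and extend it $G$-equivariantly via the identifications $\nu Gx\cong G\times_{G_x}(\nu Gx)_x$ and $\nu Gx'\cong G\times_{G_x}(\nu Gx')_{x'}$, producing $\Phi\colon\nu Gx\to\nu Gx'$.

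The next step is the passage from tangent vectors to exponentiated geodesics. The distribution $\cal H''$ is simultaneously $\pi$- and $\pi'$-horizontal, being orthogonal to the sum of the principal-vertical and $\star$-vertical distributions. By O'Neill's horizontal-geodesic theorem applied to each Riemannian submersion, a geodesic in $P$ with initial velocity $\tilde v\in\cal H''_p$ remains horizontal for both submersions, and hence projects to geodesics in $M$ and $M'$; this yields
\[
\exp_x(v)=\pi(\exp_p(\tilde v)),\qquad \exp_{x'}(\Phi_x(v))=\pi'(\exp_p(\tilde v))
\]
for every $v\in(\nu Gx)_x$ with horizontal lift $\tilde v\in\cal H''_p$.

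Assume now $\exp|_{\cal O}$ is a diffeomorphism onto an open $U\subset M$. Since $\exp$ is $G$-equivariant, I may take $\cal O$ to be $G$-invariant, so $\cal O=G\cdot\cal O_x$ with $\cal O_x:=\cal O\cap(\nu Gx)_x$ a $G_x$-invariant open set. Define $E\colon\cal O\to P$ by $E(g\cdot v):=g\exp_p(\tilde v)$ for $v\in\cal O_x$, where the left juxtaposition is the $\star$-action; $G_x$-equivariance makes $E$ well defined. The previous identities give $\pi\circ E=\exp|_{\cal O}$ and $\pi'\circ E=\exp\circ\Phi|_{\cal O}$. Since $\exp|_{\cal O}$ is a diffeomorphism, $E$ is an embedding whose image $\Sigma\subset\pi^{-1}(U)$ is a slice transverse to the principal fibers. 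I then verify that $\Sigma$ meets each $\star$-orbit in at most one point: if $gE(v_1)=E(v_2)$ with $v_1,v_2\in\cal O_x$, applying $\pi$ gives $g\cdot\exp_x(v_1)=\exp_x(v_2)$ in $M$, and the injectivity and $G$-equivariance of $\exp|_{\cal O}$ force $g\in G_x$ and $g\cdot v_1=v_2$, whence $\Phi_x(v_1)=\Phi_x(v_2)$ by $G_x$-equivariance of $\Phi_x$. Consequently $\pi'|_{\Sigma}$ is a diffeomorphism onto an open $U'\subset M'$, and composition with $\Phi$ shows that $\exp|_{\Phi(\cal O)}$ is a diffeomorphism onto $U'$.

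The chief obstacle is this final injectivity/transversality check, which couples the two independent $G$-actions on $P$ and their descents to $M$ and $M'$; the diagonal-isotropy normalization of $p$, which forces $G_x=G_{x'}$ as subgroups, is essential in order to translate equality modulo $\star$-orbits on $P$ into equality modulo $G$-orbits on $M$ and then back to equality on the $M'$ side. Everything else reduces to the $G\times G$-invariance of the metric plus standard Riemannian-submersion geometry.
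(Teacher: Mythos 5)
Your overall strategy matches the paper's: normalize $p$ to have diagonal $G\times G$-isotropy (the paper's Claim~\ref{claim:p}), identify $\cal H\leftarrow\cal H''\to\cal H'$ via $d\pi$ and $d\pi'$, then realize $\exp|_{\Phi(\cal O)}$ by pushing $\exp(\cal O)$ through a local section of $P$ and projecting by $\pi'$. However, there is a genuine gap at the point where you define the section $E$.

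You set $E(g\cdot v):=g\exp_p(\tilde v)$ for $v\in\cal O_x$, with $g$ acting by the $\star$-action alone. This is ill-defined whenever $G_x$ is nontrivial. Take $g\in G_x$ and $v\in\cal O_x$; then $g\cdot v$ also lies in $\cal O_x$, so $E(g\cdot v)$ has two candidate values: $\exp_p(\widetilde{g\cdot v})$ (from the trivial decomposition $e\cdot(g\cdot v)$) and $g\exp_p(\tilde v)$. But the $(g,g)\in G\times G$-action $q\mapsto gqg^{-1}$ is an isometry fixing $p$, preserving $\cal H''$, and covering the action of $g$ on $T_xM$, so its differential carries $\tilde v$ to $\widetilde{g\cdot v}$, and hence $\exp_p(\widetilde{g\cdot v})=g\exp_p(\tilde v)g^{-1}$. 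For your two candidate values to agree you would need $g\exp_p(\tilde v)g^{-1}=g\exp_p(\tilde v)$, i.e.\ that $g$ fixes $\exp_p(\tilde v)$ under the \emph{free} principal action, forcing $g=e$. So ``$G_x$-equivariance'' does not rescue $E$; it is precisely the obstruction.

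The paper avoids this by building the section with the conjugation ($G\times G$-diagonal) action rather than the $\star$-action alone: it defines $\Psi:Gx\to P$ by $\Psi(rx)=rp_0r^{-1}$ (well-defined exactly because $(G\times G)_{p_0}=\Delta G_x$) and then sets $\tilde\Psi(\exp_y(v),g)=\exp_{\Psi(y)}(\cal L_{\Psi(y)}(v))g^{-1}$. With this definition the equivariance identity $\tilde\Psi(\exp_{ry}(rv),sgr^{-1})=r\tilde\Psi(\exp_y(v),g)s^{-1}$ holds, and the argument you sketch for injectivity of $\pi'$ on the image then does go through (your computation forces $g=e$ rather than merely $g\in G_x$). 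If you replace your $E$ with $y\mapsto\exp_{\Psi(y)}\bigl(\cal L_{\Psi(y)}(v)\bigr)$ the remainder of your proof, including the O'Neill horizontal-geodesic step and the final transversality check, is sound and follows the same route as the paper.
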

	
	\subsection{Proof of Proposition \ref{prop:existence-invariant-metric}}
	We recall that a $G$-connection on a $G$-principal bundle is a differential $1$-form on $P$ with values on the Lie algebra $\mathfrak{g}$, which is $G$-equivariant and recognizes the Lie algebra generators of the fundamental vector fields on $P$. That is, for every $\xi\in\lie g$, $X\in TP$ and $q\in G$,
	\begin{itemize}
		\item $\omega(p\xi)=\xi$
		\item $\omega_{pg}(X g)=\Ad_{g^{-1}}\omega_p(X)$
	\end{itemize}
	Direct averaging  a connection 1-form by the $\star$-action gives an invariant form: let $\omega_0$ be a connection 1-form for $\pi:P\to M$. Given  a Haar measure $\mu$ on $G$ with unitary volume, define 
	\begin{equation}
	\omega_p(X) := \int_G(\omega_0)_{gp}(gX)d\mu.
	\end{equation}
	Since the $\bullet$- and $\star$-action commutes, it is immediate that $\omega$ is a connection 1-form for $\pi$ and that $\omega_{qp}(qX)=\omega_q(X)$ for all $X\in TP,~q\in G$.
	\halmos
	\subsection{Proof of Proposition \ref{prop:fibradonormal}}
	Given $x\in M$, let us describe an isomorphism between $\nu Gx$ and $\nu Gx'$. 
	
	\begin{claim}\label{claim:p}
		Given $x\in M$, there is $p_0\in \pi^{-1}(x)$ such that the isotropy subgroup $(G\times G)_{p_0}$ is the diagonal $\Delta G_x=\{(q,q)~|~q\in G_x\}$.
	\end{claim}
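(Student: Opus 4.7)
The plan is to read off the desired $p_0$ directly from the structure of the isotropy groups given by condition \eqref{eq:isotropy}, by using the freeness and transitivity of the principal action on fibers to shift $p$ to a point where the conjugating element becomes trivial.

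First, I would pick any point $p \in \pi^{-1}(x)$. By the special $G$-$G$-bundle hypothesis \eqref{eq:isotropy}, there exists some $g \in G$ such that
\[
(G\times G)_p = \{(h, ghg^{-1}) \mid h \in G_x\}.
\]
My candidate for $p_0$ is obtained by moving $p$ along its fiber using the principal action so as to absorb the conjugation by $g$. Concretely, I set $p_0 := p\cdot g$ (that is, I apply the principal action with parameter $s = g^{-1}$, since the action is written $p\mapsto p s^{-1}$).

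Next, I would verify the isotropy of $p_0$ by direct computation. A pair $(r,s)$ fixes $p_0$ iff $r p_0 s^{-1} = p_0$, which unwinding becomes $r p = p (g s g^{-1})$. By the characterization above, this is equivalent to the existence of $h \in G_x$ with $r = h$ and $g s g^{-1} = g h g^{-1}$, equivalently $s = h = r$. Hence
\[
(G\times G)_{p_0} = \{(h,h) \mid h \in G_x\} = \Delta G_x,
\]
as desired. Consistency with the formula $s_{pg^{-1}} = g s_p g^{-1}$ recorded in Section~\ref{ssec:notation} is automatic: shifting $p$ by the principal action conjugates the twisting element, so the particular shift by $g$ trivializes it.

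There is no real obstacle here — the claim is essentially a bookkeeping consequence of \eqref{eq:isotropy} together with transitivity of the principal action on $\pi^{-1}(x)$. The only point to be careful about is the sign convention for the principal action, i.e., writing the right multiplication as $ps^{-1}$ and tracking how this conjugates the second coordinate of the isotropy subgroup; once that is set up correctly, the computation is a one-line check.
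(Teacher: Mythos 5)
Your proof is correct, but it takes a genuinely different route from the paper's. The paper proves the claim by working in a local trivialization: it writes $P=\bigcup U_i\times G$ as the bundle attached to a $\star$-cocycle (Definition~\ref{defn:howtoconstruct}), picks $x\in U_i$, sets $p_0=(x,\id)$ and reads off the isotropy from the local formula $r(x,g)s^{-1}=(rx,sgr^{-1})$. You instead work intrinsically from the defining condition \eqref{eq:isotropy}: pick any $p\in\pi^{-1}(x)$, note the conjugating element $g$ provided by \eqref{eq:isotropy}, and translate along the fiber by the principal action to absorb the conjugation, using the bookkeeping identity $s_{pg^{-1}}=gs_pg^{-1}$. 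Both arguments are one-line verifications once the conventions are in place; what yours buys is coordinate-independence. In particular, your proof applies directly to any bundle satisfying \eqref{eq:isotropy}, without first passing through the identification with a $\star$-cocycle (which the paper only establishes after Proposition~\ref{prop:fibradonormal}, whose proof in turn relies on this claim). That is a small but genuine advantage of your approach. One caution: be careful with the sign convention for the principal action (the paper writes $p\mapsto ps^{-1}$), since the element you translate by to trivialize the conjugation depends on it; you do handle this correctly, arriving at $p_0=pg$, which is the principal action with parameter $g^{-1}$.
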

	\begin{proof}
		First of all, since $\pi(rps^{-1})=rp$, if $(r,s)\in (G\times G)_p$, then $r\in G_x$. On the other hand, we can write $P=\bigcup U_i\times G$, as the bundle defined  by the $\star$-cocycle $\{\phi_{ij}:U_i\cap U_j\to G \}$. Suppose $x\in U_i$. By definition, the $G\times G$-action on $U_i\times G$ is given by $r(x,g)s^{-1}=(rx,sgr^{-1})$. Therefore, for all $r\in G_x$, $r(x,1)r^{-1}=(rx,rr^{-1})=(x,1)$.  
	\end{proof}
	Given $p_0$ as in Claim \ref{claim:p},  let $\Psi:Gx\to P$ be defined as $\Psi(rx)=rp_0r^{-1}$.  Given $X\in TM$ and $p\in \pi^{-1}(x')$, denote by $\cal L_p(X)\in \cal H_p$ the unique horizontal vector such that $d\pi(\cal L_p(X))=X$. The maps $\Psi$ and $\cal L_p$ satisfies:
	\begin{equation}
	\Psi(ry)=r\Psi(y)r^{-1},\qquad \cal L_{rps^{-1}}(rX)=r\cal L_p(X)s^{-1}.
	\end{equation}
	Define the morphism $\Phi:\nu Gx\to \nu Gx'$ as
	\begin{equation}\label{eq:Phi}
	\Phi_{rx}(X)=d\pi'(\cal L_{\Psi(rx)}(X)).
	\end{equation}
	\begin{claim}\label{claim:2}
		$\Phi_{rx}$ is an isometry between $\cal H$ and $\cal H'$.  
	\end{claim}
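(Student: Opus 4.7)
The plan is to equip $P$ with a $G{\times}G$-invariant Riemannian metric (available by Proposition \ref{prop:existence-invariant-metric}) so that both $\pi$ and $\pi'$ become Riemannian submersions onto the induced quotient metrics on $M$ and $M'$. Writing $V^\pi_p$, $V^{\pi'}_p$ for the vertical spaces of the two projections and $\cal H''_p:=(V^\pi_p+V^{\pi'}_p)^\perp$ for the common horizontal, one has $\cal H''_p\subset\cal H_p=(V^\pi_p)^\perp$ and $\cal H''_p\subset(V^{\pi'}_p)^\perp$. The strategy is to verify that the $\pi$-horizontal lift $\cal L_{\Psi(rx)}(X)$ already lies in $\cal H''$, and then to transfer it across by $d\pi'$, whose restriction to $\cal H''$ is an isometry onto $\cal H'$.

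First I would check that $\Psi\colon Gx\to P$ is well defined: if $r_1x=r_2x$ then $h=r_2^{-1}r_1\in G_x$, and the hypothesis $(G{\times}G)_{p_0}=\Delta G_x$ from Claim \ref{claim:p} gives $hp_0h^{-1}=p_0$, so $r_1p_0r_1^{-1}=r_2p_0r_2^{-1}$. Since $\pi(\Psi(rx))=rx$, the horizontal lift $\cal L_{\Psi(rx)}(X)$ is defined and, by the Riemannian submersion property of $\pi$, has the same norm as $X$.

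The central computation is to show $\cal L_p(X)\perp V^{\pi'}_p$ whenever $X\in\cal H_{\pi(p)}$. For each $\xi\in\lie g$ the fundamental vector field $\xi^*|_p$ of the $\star$-action satisfies $d\pi(\xi^*|_p)=\xi\cdot\pi(p)$ by equivariance of $\pi$; decomposing $\xi^*|_p$ into its $\cal H_p$- and $V^\pi_p$-components and using that $\pi$ is a Riemannian submersion yields
\[g_P(\cal L_p(X),\xi^*|_p)=g_M(X,\xi\cdot\pi(p)),\]
which vanishes precisely because $X$ is normal to $G\pi(p)$. Hence $\cal L_{\Psi(rx)}(X)\in\cal H''_{\Psi(rx)}$.

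To finish, I would use that $\pi'$ is equivariant with respect to the principal action of $\pi$ (which descends to the $G$-action on $M'$), so $d\pi'(V^\pi_p)=T_{\pi'(p)}(G\cdot\pi'(p))$; combined with the Riemannian submersion isometry $(V^{\pi'}_p)^\perp\cong T_{\pi'(p)}M'$, this identifies $d\pi'(\cal H''_p)$ with the orthogonal complement $\cal H'_{\pi'(p)}$. Consequently
\[\|\Phi_{rx}(X)\|=\|d\pi'(\cal L_p(X))\|=\|\cal L_p(X)\|=\|X\|.\]
The main thing to be careful about is keeping the two orbit pictures aligned through the double fibration, in particular the identity $d\pi'(V^\pi_p)=T(G\cdot\pi'(p))$; once that is in place the isometry claim is immediate.
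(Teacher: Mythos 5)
Your computation is correct and takes a route that is roughly dual to the paper's. The paper shows that $\cal L_p$ carries $T_x(Gx)$ into $T_p\bigl((G\times G)p\bigr)$ (because $\pi(rps^{-1})=rx$) and then uses a dimension count, based on the isotropy condition \eqref{eq:isotropy}, to upgrade the resulting inclusion $\cal L_p(\cal H_x)\subset\cal H''_p$ to an equality. You instead verify the inclusion $\cal L_p(\cal H_x)\subset\cal H''_p$ directly, by computing $g_P(\cal L_pX,\xi^*|_p)=g_M(X,\xi\cdot\pi(p))=0$ against fundamental fields of the $\star$-action; this is a clean and valid argument, and your identification $d\pi'(\cal H''_p)=\cal H'_{\pi'(p)}$ via $d\pi'(\cal V^\pi_p)=T_{\pi'(p)}(G\cdot\pi'(p))$ is also correct and somewhat more explicit than the paper's remark.

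There is, however, a gap at the end. What you have established is that $\Phi_{rx}=d\pi'\circ\cal L_{\Psi(rx)}$ is a norm-preserving linear map $\cal H_x\to\cal H'_{\pi'(p)}$, i.e.\ an isometric \emph{embedding}. The Claim asserts that $\Phi_{rx}$ is an isometry \emph{between} $\cal H$ and $\cal H'$, which requires surjectivity, and for that you need the equality $\cal L_p(\cal H_x)=\cal H''_p$, not merely the inclusion you proved. The paper obtains this from the dimension count on $(G\times G)p$. In your framework the cleanest fix is to run the same orthogonality computation with the roles of $\pi$ and $\pi'$ exchanged to get $d\pi(\cal H''_p)\subset\cal H_x$; since $d\pi|_{\cal H''_p}$ is inverse to $\cal L_p$ on its image, this yields $\cal H''_p\subset\cal L_p(\cal H_x)$ and hence equality. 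Without some such step (or an explicit count of $\dim\cal H_x$ versus $\dim\cal H''_p$), the claimed bijectivity is not established.
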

	\begin{proof}
		It is sufficient to show that $\cal L_p(\cal H_x)=\cal H''_p$, since $\cal H''\subset \ker d\pi'$ and both $\cal L_p$ and $d\pi'_{(\ker d\pi')^\bot}$ are isometries. 
		However, since $\pi(rps^{-1})=rx$, $\cal L_p$ sends vectors tangent to $Gx$ to vectors tangent to $(G\times G)p$. A dimension count shows that $\cal L_p(\cal H_x)=\cal H''_p$.
	\end{proof}
	Let $\cal O\subset \nu Gx$ be such that $\exp|_\cal O$ is a diffeomorphism.  Then $\tilde\Psi:\exp(\cal O)\times G\to P$, defined as
	\[\tilde\Psi(\exp_y(v),g)=\exp_{\Psi(y)}(\cal L_{\Psi(y)}(v))g^{-1},\] 
	is a trivialization along $\exp(\cal O)$. Moreover,
	\begin{align}
	\tilde\Psi(\exp_{ry}(rv),sgr^{-1})&=\exp_{\Psi(ry)}(\cal L_{\Psi(ry)}(rv))r(sg)^{-1}=\exp_{\Psi(ry)}(\cal L_{r\Psi(y)r^{-1}}(rv))r(sg)^{-1} \nonumber\\\label{eq:eqvPhi}&=\exp_{\Psi(ry)}(r\cal L_{\Psi(y)}(v)r^{-1})r(sg)^{-1}=r\tilde\Psi(\exp_{y}(v),g) s^{-1}.
	\end{align}
	Therefore, as in \eqref{eq:pi'}, $\pi'(\tilde\Psi(\exp_y(v),g))=g\exp_y(v)$, thus, arguments as in the proof of Theorem \ref{thm:star} guarantees that
	\[\exp_y(v)\mapsto \pi'(\tilde\Psi(\exp_y(v),\id)) \]
	defines a diffeomorphism. On the other hand, since $\pi'$ is a Riemannian submersion and $v\in(\ker d\pi')^\bot$, \begin{align*}
	\pushQED{\qed}
	\pi'(\tilde\Psi(\exp_{rx}(v),\id))&=\pi'\exp_{rxr^{-1}}(\cal L_{rxr^{-1}}(v))\\&=\exp_{\pi '(rxr^{-1})}(d\pi'\cal L_{rxr^{-1}}(v))=\exp_{rx'}(\Phi(v)).\qedhere
	\popQED		
	\end{align*}
	\begin{remark}
		\label{rmk:general}
		The orbit $Gx$ in Proposition \ref{prop:fibradonormal} can be replaced by any $G$-invariant submnifold $N$, provided there is a map $\Psi:N\to P$ satisfying $\Psi(rx)=r\Psi(x)r^{-1}$. The rest of the proof proceeds along the same lines.
	\end{remark}
	\begin{remark}
		The equivalence of condition \eqref{eq:isotropy} and Definition \ref{defn:howtoconstruct} follows from the equivariance of $\tilde \Phi$ (equation \eqref{eq:eqvPhi}): let $P\to M$ be a (possibly non-special) $G$-$G$-bundle satisfying \eqref{eq:isotropy}. For every orbit $Gx\subset M$, there is an open tubular neighborhood $U_x$ of $G_x$ such that $\exp:\cal O\to U_x$ is a diffeomorphism for some $\cal O_x\subset \nu Gx$. Consider the trivialization $\tilde \Phi_x:U_x\times G\to P$ as in the proof of Proposition \ref{prop:fibradonormal}. The equivariance \eqref{eq:eqvPhi} guarantees that the transition functions related to the open cover $\{U_x\}_{x\in M}$ satisfies the equivariance condition \eqref{eq:conjugation}.
	\end{remark}

	

	\section{Cheeger deformations and Ricci curvature}
	
	Lets recall briefly the process known as \emph{Cheeger deformation}. The main reference are the notes by W. Ziller \cite{ziller} on M. M\"uter's thesis. The exposition and Theorem \ref{thm:principal} does not intend to be original, but to make the article more self-contained.
	
	Let $(M,g)$ be a Riemannian manifold and $(G,Q)$ a compact Lie group with bi-invariant metric $Q$ acting by isometries on $M$. For each $p \in M$ we can decompose orthogonally the Lie algebra $\mathfrak{g}$ of $G$ as $\mathfrak{g} = \mathfrak{g}_p\oplus\mathfrak{m}_p$, where $\mathfrak{g}_p$ denotes the lie algebra of the isotropy group at $p$. Observe that $\mathfrak{m}_p$ is isomorphic to the tangent space to the orbit $G\cdot p$. 
	
	We call the tangent space $T_pG\cdot p$ as the \emph{vertical space} at $p$, $\mathcal{V}_p$. Its orthogonal complement, $\cal H_p$   is the \emph{horizontal space}. Given an element $U\in \lie g$, we define its action vector
	\begin{equation*}
	U^*_p=\frac{d}{dt}\Big|_{t=0}e^{tU}\cdot p.
	\end{equation*}
	The map $U\mapsto U^*_p$ defines linear morphism whose kernel is $\lie g_p$ (the map $U\mapsto U^*$ defines a Lie algebra morphism between $\lie g$ and the algebra of vector fields $\lie X(M)$ with the vector field brackets). In particular, a tangent vector $\overline X$ at $p$ can be uniquely decomposed  as $\overline X = X + U^{\ast}$, where $X$ is horizontal and $U\in \lie m_p$.
	
	The main idea in the Cheeger deformation is to consider the product manifold $M\times G$, observing that the action 
	\begin{equation}\label{eq:actioncheeger}
	r(p,g) := (r\cdot p, gr^{-1})
	\end{equation}
	(compare with \eqref{eq:staraction}) is isometric for the product metric $g\times \frac{1}{t}Q$, $t > 0$. Action \eqref{eq:actioncheeger} is free and its quotient space is diffeomorphic to $M$ (see \eqref{eq:pi'} and \eqref{eq:iota}). In fact, the projection
\begin{align}\label{eq:pi'cheeger}
	\pi' : M\times G &\to \star \big \backslash M\times G\\
	(p,g) &\mapsto  g\cdot p \nonumber
\end{align}
	identifies $\star \big\backslash M\times G$ with $M$, inducing a family of metrics $g_t$ on $M$. We proceed defining some important tensors:
	\begin{itemize}
		\item let $P : \mathfrak{m}_p \to \mathfrak{m}_p$ be the \emph{orbit tensor} of the $G$-action, defined by
		\[g(U^{\ast},V^{\ast}) = Q(PU,V),\quad\forall U^{\ast}, V^{\ast} \in \mathcal{V}_p.\] 
		$P$ is a symmetric and positive definite operator 
		\item denote by $P_t:\lie m_p\to \lie m_p$, the operator 
		\[g_t(U^{\ast},V^{\ast}) = Q(P_tU,V), \quad\forall U^{\ast}, V^{\ast} \in \mathcal{V}_p\]
		\item  we define the \emph{metric tensor of $g_t$},  $C_t:T_pM\to T_pM$ as
		\[g_t(\overline{X},\overline{Y}) = g(C_t\overline{X},\overline{Y}), \quad\forall \overline{X}, \overline{Y} \in T_pM\]
	\end{itemize}
	\begin{proposition}[Proposition 1.1 in \cite{ziller}]\label{propauxiliar}The tensors above satisfy:
		\begin{enumerate}
			\item $C_t = 1$ in $\mathcal{H}_p$, $C_t = P^{-1}P_t \in \mathcal{V}_p$
			\item $P_t = (P^{-1} + t1)^{-1} = P(1 + tP)^{-1}$
			\item If $\overline{X} = X + U^{\ast}$ then $C_t(\overline{X}) = X + (1 + tP)^{-1}U^{\ast}$
		\end{enumerate}
	\end{proposition}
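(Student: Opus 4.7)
The three statements are really one computation: compute the horizontal lift of an arbitrary vector at $(p,\id)\in M\times G$ with respect to the $\star$-action \eqref{eq:actioncheeger} and the metric $g\oplus\tfrac{1}{t}Q$, then read off $C_t$ from the fact that $\pi'$ in \eqref{eq:pi'cheeger} is a Riemannian submersion onto $(M,g_t)$. All three items are algebraic consequences of that single formula.

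First I would identify the vertical distribution of the $\star$-action on $M\times G$. The fundamental vector field of $W\in\lie g$ at $(p,\id)$ is $(W^*,-W)$, so $\mathcal{V}^{M\times G}_{(p,\id)}=\{(W^*,-W)\mid W\in\lie g\}$. Next, given $\overline X=X+U^*\in T_pM$ with $X\in\mathcal{H}_p$ and $U\in\lie m_p$, I would write its horizontal lift as $(X_0,V_0)\in T_pM\oplus\lie g$ and impose two conditions. The submersion condition $d\pi'(X_0,V_0)=X_0+V_0^*=\overline X$ is straightforward. The orthogonality condition,
\[g(X_0,W^*)-\tfrac{1}{t}Q(V_0,W)=0\quad\text{for every }W\in\lie g,\]
forces $V_0\in\lie m_p$ (by taking $W\in\lie g_p$) and then, writing the vertical part of $X_0$ as $\tilde U^*$ for some $\tilde U\in\lie m_p$, forces $V_0=tP\tilde U$. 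Combined with the lift condition $\tilde U+V_0=U$, this yields the key identity
\[(1+tP)\tilde U=U,\qquad\text{i.e.,}\qquad\tilde U=(1+tP)^{-1}U,\]
and consequently $X_0=X+((1+tP)^{-1}U)^*$, $V_0=tP(1+tP)^{-1}U$.

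Now I would simply compute $g_t(\overline X,\overline Y)$ by polarizing $|X_0|_g^2+\tfrac{1}{t}|V_0|_Q^2$. The vertical contribution telescopes: using $Q(P\tilde U,\tilde U)+tQ(P\tilde U,P\tilde U)=Q((P+tP^2)\tilde U,\tilde U)=Q(P(1+tP)\tilde U,\tilde U)=Q(PU,\tilde U)$, one obtains
\[g_t(\overline X,\overline Y)=g(X,Y)+Q\bigl(P(1+tP)^{-1}U,V\bigr)\]
for $\overline Y=Y+V^*$. Specializing to $\overline X=U^*$, $\overline Y=V^*$ yields $Q(P_tU,V)=Q(P(1+tP)^{-1}U,V)$, hence $P_t=P(1+tP)^{-1}$, and the algebraic identity $P(1+tP)^{-1}=((1+tP)P^{-1})^{-1}=(P^{-1}+t\,1)^{-1}$ gives item $(2)$. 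Comparing with the definition $g_t(\overline X,\overline Y)=g(C_t\overline X,\overline Y)$ gives $C_t(X+U^*)=X+((1+tP)^{-1}U)^*$, which is item $(3)$; item $(1)$ is then immediate, since $(1+tP)^{-1}=P^{-1}P_t$ on $\lie m_p$.

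The only delicate step is the bookkeeping in the orthogonality condition: one must carefully separate the kernel $\lie g_p$ (which forces $V_0\in\lie m_p$) from the effective part $\lie m_p$ (which determines $V_0$ in terms of $\tilde U$). Once that is in place, everything else is linear algebra with the commuting operators $P$ and $(1+tP)^{-1}$, and no curvature or Lie-bracket computations are required.
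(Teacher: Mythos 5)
Your proof is correct. The paper supplies no proof of its own here — it simply cites Proposition 1.1 of Ziller's notes [ziller] — and your horizontal-lift computation for the submersion $M\times G\to M$ is exactly the standard derivation given there, so the two approaches coincide.
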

	
	The advantage of Cheeger deformations is that $g_t$ does not produce `new' planes with zero curvature -- in fact, up to a reparametrization of $Gr_2(TM)$ (the Grassmannian of 2-planes of $TM$), the sectional curvature of a fixed plane is non-decreasing. Let $\overline{X} = X + U^{\ast}, \overline{Y} = Y + V^{\ast}$  tangent vectors. Then $\kappa_t(X,Y) := K_{g_t}(C_t^{-1}\overline{X},C_t^{-1}\overline{Y})$ satisfies
	\begin{equation}\label{eq:curvaturaseccional}
	\kappa_t(\overline X,\overline{Y}) = \kappa_0(\overline{X},\overline{Y}) +\frac{t^3}{4}\|[PU,PV]\|_Q^2 + z_t(\overline{X},\overline{Y}),
	\end{equation}
	where $z_t$ is a non-negative term related to the fundamental tensors of the foliation (see \cite[Proposition 1.3]{ziller}).
	In what follows, we  study the behavior of Cheeger deformations on the Ricci curvature. 
	
	Fix a point $p \in M$ and consider $\{e_1,\ldots,e_k,e_{k+1},\ldots,e_n\}$ a $g$-orthonormal base for $T_pM$ such that $e_{k+1},\ldots,e_n$ are horizontal and, for $i\leq k$, $e_i = \lambda_i^{-1/2}v^{\ast}_i$ where $v_1,\ldots,v_k$ is a set of $Q$-orthonormal eigenvectors of $P$ with eigenvalues $\lambda_1,\ldots,\lambda_k$,  taken in non-decreasing order. Observe that $\{C_t^{-1/2}e_i\}_{i=1}^n$ is a $g_t$-orthonormal base for $T_pM$. From Proposition \ref{propauxiliar}, $C_t^{-1/2}e_i = (1+t\lambda_i)^{1/2}e_i$ for $i \le k$ and $C_t^{-1/2}e_i = e_i$ for $i > k.$
	
	Define $ \Ricci^{\mathcal{H}}(\overline{X}) := \sum_{i=k+1}^n\kappa_0(\overline{X},e_i)$. We show that $\Ricci_g^{\cal H}$ and the curvature of each orbit (seem as a normal homogeneous space) determine the Ricci curvature. From \eqref{eq:curvaturaseccional} we have
	
	\begin{multline}
	\Ricci_{g_t}(C_t^{-1}\overline{X}) = \sum_{i=1}^{n}R_{g_t}(C_t^{-1/2}e_i,C_t^{-1}\overline{X},C_t^{-1}\overline{X},C_t^{-1/2}e_i) =\sum_{i=1}^n\kappa_t(C_t^{1/2}e_i,\overline{X})\\
	= \sum_{i=1}^n\kappa_0(C_t^{1/2}e_i,\overline{X}) + \sum_{i=1}^nz_t(C_t^{1/2}e_i,\overline{X}) + \frac{t^3}{4}\sum_{i=1}^k\|[PC_t^{1/2}\lambda_i^{-1/2}v_i,P\overline{X}]\|_Q^2
	\\= \Ricci_g^{\mathcal{H}}(\overline{X}) + \sum_{i=1}^nz_t(C_t^{1/2}e_i,\overline{X}) + \sum_{i=1}^k\frac{1}{1+t\lambda_i}\Big(\kappa_0(\lambda_i^{-1/2}v^{\ast}_i,\overline{X}) + \frac{\lambda_it^3}{4}\|[v_i,PU]\|_Q^2 \Big).
	\end{multline}
	
	In particular,
	\begin{align*}
	\Ricci_{g_t}(\overline{X}) =& \Ricci_g^{\mathcal{H}}(C_t\overline{X}) + \sum_{i=1}^nz_t(C_t^{1/2}e_i,C_t\overline{X}) \\&+ \sum_{i=1}^k\frac{1}{1+t\lambda_i}\Big(\kappa_0(\lambda_i^{-1/2}v^{\ast}_i,C_t\overline{X}) + \frac{\lambda_it}{4}\|[v_i,tP(1+tP)^{-1}U]\|_Q^2\Big).
	\end{align*}
	On the other hand, using Proposition \ref{propauxiliar} we get:
	\begin{align}
	\lim_{t\to \infty}\Ricci_g^{\mathcal{H}}(C_t\overline{X})&=\Ricci_g^{\mathcal{H}}({X}) \label{eq:horizontalRicterm}\\
	\lim_{t\to\infty}\sum_{i=1}^k\frac{\lambda_i}{1+t\lambda_i}\kappa_0(v^{\ast}_i,C_t\overline{X})&=0\\
	\lim_{t\to\infty}\sum_{i=1}^k\frac{t\lambda_i}{4+4t\lambda_i} \|[v_i,tP(1+tP)^{-1}U]\|_Q^2&=\sum_{i=1}^k\frac{1}{4}\|[v_i,U]\|_Q^2\label{eq:orbit}
	\end{align}
	
	Consider $G/G_p$ as a normal homogeneous space where $G$ is endowed with the metric defined by $Q$. Then \eqref{eq:orbit} is exactly the Ricci curvature of $G/G_p$. In particular, there is a constant $K>0$ such that $\sum_{i=1}^k\frac{1}{4}\|[v_i,U]\|_Q^2\geq K||U||^2_Q$, provided $G/G_p$ has finite fundamental group.
	
	With \eqref{eq:horizontalRicterm}-\eqref{eq:orbit} at hand, a compactness argument shows that $g_t$ has positive Ricci curvature, provided its orbits have finite fundamental group and $\Ricci_g^\cal H(X)\geq 0$ for all non-zero $X\in\cal H$.
	
	\begin{theorem}\label{thm:principal}
		Let $(M,g)$ be a compact Riemannian $G$-manifold, where $G$ is a compact. Suppose the orbits of $M$ have finite fundamental group and $\Ricci_g^{\mathcal{H}}(X) > 0$ for all non-zero  $X \in \mathcal{H}$. Then $g_t$ has positive Ricci curvature for some $t>0$. 
	\end{theorem}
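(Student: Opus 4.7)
The plan is to leverage the explicit asymptotics for $\Ricci_{g_t}$ that were already derived in equations \eqref{eq:horizontalRicterm}--\eqref{eq:orbit} just above the theorem, and to upgrade pointwise positivity of the limit to uniform positivity on a compact set. For a tangent vector $\overline X = X + U^{\ast} \in T_pM$ with $X \in \cal H_p$ and $U \in \lie m_p$, I would start from the expression
\[
\Ricci_{g_t}(\overline X) = \Ricci_g^{\cal H}(C_t\overline X) + \sum_{i=1}^n z_t(C_t^{1/2}e_i, C_t\overline X) + \sum_{i=1}^k\tfrac{1}{1+t\lambda_i}\Bigl(\kappa_0(\lambda_i^{-1/2}v_i^{\ast}, C_t\overline X) + \tfrac{\lambda_i t}{4}\|[v_i, tP(1{+}tP)^{-1}U]\|_Q^2\Bigr),
\]
discard the non-negative $z_t$-contribution, and apply \eqref{eq:horizontalRicterm}--\eqref{eq:orbit} to conclude that
\[
\liminf_{t \to \infty}\Ricci_{g_t}(\overline X) \;\geq\; \Ricci_g^{\cal H}(X) + \tfrac14\sum_{i=1}^k \|[v_i, U]\|_Q^2 \;=\; \Ricci_g^{\cal H}(X) + \Ricci_{G/G_p}(U),
\]
the last summand being the Ricci curvature of the orbit $G/G_p$ with the normal homogeneous metric induced by $Q$.

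Next I would verify strict positivity of this limit on every non-zero $\overline X$. If $X \neq 0$, the hypothesis $\Ricci_g^{\cal H}(X) > 0$ applies directly. If $X = 0$ but $U \neq 0$, the classical fact that a compact normal homogeneous space $G/H$ has positive Ricci curvature exactly when $\pi_1(G/H)$ is finite yields $\Ricci_{G/G_p}(U) > 0$ (the formula $\tfrac14\sum_i\|[v_i, U]\|_Q^2$ is always $\geq 0$, and a zero direction would correspond to an abelian factor in the universal cover, contradicting finiteness of $\pi_1$). Since $\Ricci_{g_t}$ is a quadratic form, it now suffices to bound it below on the compact $g$-unit sphere bundle $S_gM$.

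Finally, the passage from pointwise to uniform positivity is a compactness argument. By the principal orbit theorem for compact transformation groups on a compact manifold, only finitely many conjugacy classes of isotropy subgroups occur, so $M$ decomposes into finitely many orbit type strata; on each closed stratum $P$, the eigendata $(\lambda_i, v_i)$, and the limit function $(p,\overline X) \mapsto \Ricci_g^{\cal H}(X) + \Ricci_{G/G_p}(U)$ vary continuously and admit a uniform positive lower bound on the corresponding piece of $S_gM$. Taking the maximum over strata of the finitely many thresholds $t_0$ furnished by the locally uniform convergence of the explicit expression for $\Ricci_{g_t}(\overline X)$, one obtains a single $t_0$ for which $\Ricci_{g_{t_0}}$ is positive-definite. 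The main obstacle is precisely this uniformity step, because the decomposition $\overline X = X + U^{\ast}$ is not continuous across orbit type changes where $\dim\cal V$ jumps; I would handle it by working on each orbit stratum with the slice theorem as a local model, retaining the non-negative $z_t$-term as slack, and concluding that the discontinuities of the decomposition do not obstruct the lower bound since on the closure of a stratum the isotropy can only grow, making $\cal H$ larger and the hypothesis on $\Ricci_g^{\cal H}$ only stronger.
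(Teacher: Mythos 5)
You rely on the same explicit limit formula for $\Ricci_{g_t}$ (equations \eqref{eq:horizontalRicterm}--\eqref{eq:orbit}) that the paper derives before the theorem, and your identification of the limit with $\Ricci_g^{\cal H}(X)+\Ricci_{G/G_p}(U)$, together with the finite-$\pi_1$ criterion for positive Ricci on a normal homogeneous space, matches the paper's discussion. Where you diverge is the compactness step. The paper argues by contradiction: if no $t>0$ works, then for each $n$ there is a $g$-unit vector $\overline X_n \in T_{p_n}M$ with $\Ricci_{g_n}(\overline X_n)\leq 0$; extracting a convergent subsequence $(p_n,\overline X_n)\to(p,\overline X)$ in the compact unit sphere bundle, the limit formula gives a strictly positive limit, a contradiction. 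This is shorter than your route: a single convergent subsequence furnishes the uniformity for free, and the orbit-type discontinuity you worry about is absorbed into one limit computation along a specific sequence rather than into a global continuity argument.

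Your orbit-stratification approach has a genuine gap precisely at the step you flag. The orbit-type strata $M_{(H)}$ are only locally closed submanifolds, not compact; the object that is compact is the closure $\overline{M_{(H)}}$, but on that closure the eigendata $(\lambda_i, v_i)$ of the orbit tensor do \emph{not} vary continuously (some $\lambda_i\to 0$ as you approach a boundary of smaller orbit type), so a continuity-plus-compactness argument on "closed strata" does not directly produce a uniform threshold $t_0$. The qualitative observation that the isotropy can only grow on the closure, making $\cal H$ larger and $\Ricci_g^{\cal H}$ positivity a stronger hypothesis, tells you the \emph{limiting value} is positive but does not control the \emph{rate} at which $\Ricci_{g_t}(\overline X)$ approaches that limit; near a stratum boundary the coefficient $1/(1+t\lambda_i)$ can remain close to $1$ for any fixed $t$, and the possibly negative $\kappa_0$ terms need not be small, so the locally uniform convergence you invoke can fail exactly there. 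Retaining the $z_t$ terms as slack does not repair this, since they are only non-negative, not bounded below. The paper's contradiction-with-subsequence argument sidesteps the need to produce a per-stratum $t_0$ altogether and is the route to take here.
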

	\begin{proof}
		Define $F(t,\overline{X})=\Ricci_{g_t}(\overline{X})$. We argue by contradiction, supposing that there is no $t>0$ such that $g_t$ has positive Ricci curvature. Therefore, for each time $t=n\in \bb N$, there is a point $p_n$ and a unitary vector $\overline{X}_n=X_n+U_n^*$ such that $F(p_n,t_n,\overline{X}_n)\leq 0$. Passing to a subsequence, if necessary, we conclude that the limit $\lim\overline{X}_n=\overline{X}$ satisfy
		\[0\geq \lim_{n\to \infty}F(t_n,\overline{X}_n)=\Ricci_g^{\mathcal{H}}({X})+\sum_{i=1}^k\frac{1}{4}\|[v_i,U]\|_Q^2>0,\]
		a contradiction.\end{proof}
	
	\subsection{The geometry of cross-diagrams}
	We now apply Theorem \ref{thm:principal} to $G$-$G$-bundles. Let $M \leftarrow P \to M'$ be a $G$-$G$-bundle. Proposition \ref{prop:existence-invariant-metric} guarantees that, given a $G$-invariant metric $g_M$ on $M$, there is a $G{\times}G$-invariant metric $g_P$ on $P$ and (given $g_P$) a unique metric $g_{M'}$ on $M'$ that makes $\pi'$ a submersion.

	%
	
	Denote by $\cal L_\pi$ and $\cal L_{\pi'}$ the horizontal lifts of $\pi$ and $\pi'$, respectively, and  $\cal V^\pi=\ker d\pi$, $\cal V^{\pi'}=\ker d\pi'$. We recall the classical O'Neill formula for both $\pi$ and $\pi'$ (see O'Neill \cite{oneill} or Gromoll--Walschap \cite{gw} for details). Let $X,Y\in(\cal V^{\pi})^\bot$, $X',Y'\in(\cal V^{\pi'})^\bot$. We have
		\begin{align}\label{oneillformula}
		K_{g_M}(X,Y) &= K_{g_P}(\mathcal{L}_{\pi}X,\mathcal{L}_{\pi}Y) + \frac{3}{4}\|[{\mathcal{L}_{\pi}X},\mathcal{L}_{\pi}Y]^{\mathcal{V}^{\pi}}\|^2_{g_P},
		\\K_{g_{M'}}(X',Y') &=  K_{g_P}(\mathcal{L}_{\pi'}X,\mathcal{L}_{\pi'}Y) + \frac{3}{4}\|[{\mathcal{L}_{\pi'}X},\mathcal{L}_{\pi'}Y]^{\mathcal{V}^{\pi'}}\|^2_{g_P}.
		\end{align}
	Recall from the proof of Claim \ref{claim:2} that, given $p\in P$, $\cal L_p|_{\cal H_{\pi(p)}}:\cal H_{\pi(p)}\to \cal H''_p$ and $d\pi'|_{\cal H''_p}:\cal H''_p\to \cal H'_{\pi'(p)}$ are isometries  (we keep the notation in section \ref{sec:geometry}, denoting $\cal H$, $\cal H'$ and $\cal H''$ the orthogonal to the $G$-orbits in $M$, $M'$ and the $G{\times}G$-orbit on $P$). Being $\cal H''$ orthogonal to both $\cal V^\pi$ and $\cal V^{\pi'}$,  we get
		\begin{equation}
		K_{g_{M'}}(d\pi'\mathcal{L}_{\pi}X,d\pi'\mathcal{L}_{\pi}Y) =  K_{g_M}(X,Y) + \frac{3}{4}\Big\{\|[{\mathcal{L}_{\pi}X},\mathcal{L}_{\pi}Y]^{\mathcal{V}^{\pi'}}\|^2_{g_P} - \|[{\mathcal{L}_{\pi}X},\mathcal{L}_{\pi}Y]^{\mathcal{V}^{\pi}}\|^2_{g_P}\Big\}.
		\end{equation}
	
	We claim we can change the metric on $P$ without affecting the curvatures $K_{g_{M'}}(d\pi'\mathcal{L}_{\pi}X,d\pi'\mathcal{L}_{\pi}Y),~  K_{g_M}(X,Y)$, but with  $\|[{\mathcal{L}_{\pi}X},\mathcal{L}_{\pi}Y]^{\mathcal{V}^{\pi}}\|^2_{g_P}$ arbitrarily small. Precisely:
		\begin{proposition}\label{prop:util}
			Let $M\leftarrow P\to M'$ be a $G$-$G$-bundle with $P$ compact. Let $g_P$ be a $G{\times}G$-invariant metric on $P$. Then, given $\epsilon > 0$, there exists $t > 0$ such that the metric $g_{P_t}$, obtained by a finite Cheeger deformation on the $G{\times }G$-manifold $P$, satisfy: for each pair $X,Y\in \cal H''$, 
			\[	K_{g_{M'_t}}(d\pi'X,d\pi'Y) -  K_{g_{M_t}}(d\pi X,d\pi Y)\geq -\epsilon||X\wedge Y||_{g_P}^2,\]
			where $g_{M_t}$, $g_{M'_t}$ are the resulting submersion metrics on $M$ and $M'$, respectively.
		\end{proposition}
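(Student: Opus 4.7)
I plan to apply a Cheeger deformation on $(P,g_P)$ with respect to the compact group $G\times G$, obtaining a $G\times G$-invariant family $g_{P_t}$, $t>0$, which descends to $g_{M_t}$ and $g_{M'_t}$. Denote by $\cal V=T(G\times G\cdot p)$ the orbit tangent. A central observation is that this Cheeger deformation modifies $g_P$ only on $\cal V$: for $X, Y\in\cal H''$ and $U^*, V^*\in\cal V$,
\[g_{P_t}(X+U^*,Y+V^*)=g_P(X,Y)+Q(\cal P_t U,V),\qquad \cal P_t=\cal P(1+t\cal P)^{-1},\]
where $\cal P$ is the orbit tensor of the $G\times G$-action on $(P,g_P)$. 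Consequently, $g_{P_t}|_{\cal H''}=g_P|_{\cal H''}$ and $\cal H''$ remains $g_{P_t}$-orthogonal to $\cal V$. Since $\cal V^\pi,\cal V^{\pi'}\subset\cal V$, this forces $\cal H''$ to lie in the $g_{P_t}$-horizontal distributions of both $\pi$ and $\pi'$; in particular, every $X\in\cal H''$ is its own $g_{P_t}$-horizontal lift for both projections, and $\|d\pi X\wedge d\pi Y\|_{g_{M_t}}=\|X\wedge Y\|_{g_P}=\|d\pi'X\wedge d\pi'Y\|_{g_{M'_t}}$.

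Next, I apply O'Neill's formula \eqref{oneillformula} to both submersions with metric $g_{P_t}$ to get
\begin{align*}
K_{g_{M_t}}(d\pi X,d\pi Y) &= K_{g_{P_t}}(X,Y)+\tfrac{3}{4}\|[\tilde X,\tilde Y]^{\cal V^\pi}\|^2_{g_{P_t}},\\
K_{g_{M'_t}}(d\pi'X,d\pi'Y) &= K_{g_{P_t}}(X,Y)+\tfrac{3}{4}\|[\tilde X,\tilde Y]^{\cal V^{\pi'}}\|^2_{g_{P_t}},
\end{align*}
where $\tilde X,\tilde Y$ are $G\times G$-invariant local sections of $\cal H''$ (which are $g_{P_t}$-horizontal and basic for both $\pi$ and $\pi'$ at every $t$) and the vertical projections are $g_{P_t}$-orthogonal. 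Tensoriality of the O'Neill A-tensor ensures that the pointwise values at $p$ depend only on $X_p,Y_p$. Subtracting and discarding the nonnegative $\cal V^{\pi'}$-contribution gives
\[K_{g_{M'_t}}(d\pi'X,d\pi'Y)-K_{g_{M_t}}(d\pi X,d\pi Y)\geq -\tfrac{3}{4}\|[\tilde X,\tilde Y]^{\cal V^\pi}\|^2_{g_{P_t}},\]
so the task reduces to bounding the right-hand side by $O(1/t)\cdot\|X\wedge Y\|^2_{g_P}$ uniformly on $P$.

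For the bound I note that, since $\cal V^\pi\subset\cal V$ and $g_{P_t}$-orthogonal projection is a contraction, $\|[\tilde X,\tilde Y]^{\cal V^\pi}\|_{g_{P_t}}\leq\|[\tilde X,\tilde Y]^{\cal V}\|_{g_{P_t}}$, where $[\cdot]^{\cal V}$ denotes the projection onto $\cal V$ along $\cal H''$ (a $t$-independent projection, since $\cal H''$ is the $g_{P_t}$-orthogonal complement of $\cal V$ for every $t$). Writing $[\tilde X,\tilde Y]^{\cal V}=U^*_p$ for $U\in\lie m_p\subset\lie g\oplus\lie g$, the defining identity of Cheeger deformation gives $\|U^*_p\|^2_{g_{P_t}}=Q(\cal P_t U,U)\leq t^{-1}\|U\|^2_Q$, because all eigenvalues of $\cal P_t$ are bounded by $1/t$. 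By tensoriality, $(X,Y)\mapsto U$ is continuous and alternating bilinear on $\cal H''|_p$; compactness of $P$ then produces a uniform constant $C>0$ with $\|U\|^2_Q\leq C\|X\wedge Y\|^2_{g_P}$. Taking any $t>3C/(4\epsilon)$ closes the proof. The main obstacle is securing this uniform constant: the decomposition $\cal V=\cal V^\pi+\cal V^{\pi'}$ fails to be direct at orbits with nontrivial isotropy $\Delta G_x=(G\times G)_p$ and $\cal P$ may vary non-smoothly across orbit types, complicating direct componentwise estimates—but the argument sidesteps this by bounding via the canonical full $\cal V$-projection and using only the uniformly bounded spectrum of $\cal P_t$ on the compact manifold $P$.
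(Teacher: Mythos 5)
The proposal contains a genuine gap that the paper avoids by a different choice of Cheeger-deformation group, so the two approaches are not equivalent.

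You perform the Cheeger deformation of $P$ with respect to the full $G\times G$-action. The paper instead deforms $P$ only along the $\pi$-principal $G$-action (this is what the proof actually does: the auxiliary action on $P\times G$ is $q\cdot(p,g)=(pq^{-1},gq^{-1})$). This choice is essential, and your substitution of $G\times G$ breaks the argument at exactly the place you flag but then attempt to wave away. The chain you need is
\[
\|Z^{\mathcal V}\|^2_{g_{P_t}} \;=\; Q(\mathcal P_t U,U)\;\le\; t^{-1}\|U\|^2_Q \;\le\; t^{-1}\,C\,\|X\wedge Y\|^2_{g_P},
\]
and the last inequality is the unjustified step: what is tensorial (hence uniformly bounded on compact $P$) is $\|Z^{\mathcal V^\pi}\|_{g_P}$ and $\|Z^{\mathcal V}\|_{g_P}$, i.e.\ $Q(\mathcal P U,U)$, \emph{not} $\|U\|_Q^2=Q(U,U)$. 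Passing from $Q(\mathcal P U,U)$ to $Q(U,U)$ costs a factor $\lambda_{\min}(\mathcal P)^{-1}$, where $\mathcal P$ is the orbit tensor of the $G\times G$-action. Because the $G\times G$-action on $P$ is \emph{not} free (its isotropy at $p$ is $\Delta G_{\pi(p)}$, cf.\ \eqref{eq:isotropy}, which is nontrivial at non-principal orbits and in particular at the fixed points used throughout the connected-sum construction), $\lambda_{\min}(\mathcal P_p)\to0$ as $p$ approaches a singular orbit, and no uniform $C$ exists. Compactness does not rescue the bound, since the issue is a discontinuity of $\lie m_p$ and blow-up of $(U\mapsto U^*_p)^{-1}$ across orbit strata, not an unboundedness along a non-compact direction. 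The upper bound $\mathcal P_t\le t^{-1}\id$ is correct but does not ``sidestep'' the problem: by itself it only shows $\|Z^{\mathcal V}\|^2_{g_{P_t}}\le t^{-1}\|U\|^2_Q$, and some uniform control on $\|U\|_Q$ in terms of the tensorial datum is still required.

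The paper's argument avoids this entirely: deforming by the free $\pi$-principal $G$-action, the relevant orbit tensor (restricted to $\mathcal V^\pi$) has eigenvalues bounded below by some $\lambda>0$ on the compact $P$, so Lemma~\ref{util} gives $g_{P_t}(V,V)\le(1+t\lambda)^{-1}\|V\|^2_{g_P}$ uniformly for $V\in\mathcal V^\pi$. Moreover, deforming by the $\pi$-principal action keeps both the decomposition $TP=\mathcal V^\pi\oplus(\mathcal V^\pi)^{\perp_{g_P}}$ and the vector $[\,\tilde X,\tilde Y\,]^{\mathcal V^\pi}$ unchanged as $t$ varies, so only the norm shrinks; combined with the tensorial bound $\|[X,Y]^{\mathcal V^\pi}\|^2_{g_P}\le C\|X\|^2_{g_P}\|Y\|^2_{g_P}$ this yields the claimed estimate. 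Your application of O'Neill's formula, the monotonicity of orthogonal projections, and the general strategy of discarding the nonnegative $\mathcal V^{\pi'}$-term are all correct; replacing the $G\times G$-Cheeger deformation by the $\pi$-principal $G$-Cheeger deformation (and then bounding $\|[\,\tilde X,\tilde Y\,]^{\mathcal V^\pi}\|_{g_{P_t}}$ directly, with no detour through $\mathcal V$) repairs the proof.
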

\begin{proof}		Observe that, if $g_P$ is $G{\times}G$-invariant, so it is $g_{P_t}$. In fact, the product metric $g_P\times t^{-1}Q$ on $P\times G$ is $G{\times}G{\times}G$-invariant, where $G{\times}G{\times}G$ acts as
		\[(r,s,q)(p,g)=(rpq^{-1},sgq^{-1}).\]
		Action \eqref{eq:actioncheeger} is given by the $q$-coordinate, $s$ and $r$ descends to the $\pi$-principal and $\star$-actions on the quotient of $P\times G$ by $q$ (which is identified with $P$ via \eqref{eq:pi'cheeger}). Therefore, the resulting metric $g_{P_t}$ is invariant both under the $\pi$-principal action and the $\star$-action. 
		
		\begin{remark}
			It is worth remarking that the resulting metric on $M'$, $g_{M'_t}$, is a Cheeger deformation as well.
		\end{remark}
		
		To prove Proposition \ref{prop:util}, note that the vector $[X,Y]^{\mathcal{V}^{\pi}}$ does not change with Cheeger deformation, since  Cheeger deformation does not affect the horizontal (or vertical) space. Therefore, Proposition \ref{prop:util} follows from Lemma \ref{util} below. We set some notation first.
		
		
		Given $p\in P$, let  $\mathcal{P}_p$ be the orbit tensor associated to the $\pi$-principal action. Given a $G{\times} G$-invariant metric $g_P$ on $P$, there exists an orthonormal basis of $\mathcal{P}_p$-eigenvector    $\{v_1,...,v_k\}$ for $\mathcal{V}_p^{\pi}$. We denote by $\lambda_i$ the eigenvalue associated  to $v_i$. Since the principal action is free and  $P$ is compact, there is positive number $\lambda := \min_{p\in P, i}\lambda_i$.	
		
		\begin{lemma}\label{util}
		Given  $\epsilon > 0$, there exists $t > 0$ such that $g_{P_t}$ satisfies, for every $V\in\cal V^{\pi}$,
		\[g_{P_t}(V,V) \leq \epsilon\|V\|^2_{g_P}.\]
	\end{lemma}
	\begin{proof}
	 Given $p \in P$, each unitary  $V\in\cal V^\pi_p$  can be written uniquely as $V = \sum_i g_P(X,v_i)v_i$. In particular,
		\[g_{P_t}(V,V) = g_{P}((1 + t\mathcal{P})^{-1}V,V) = \sum_{i,j}g_P(V,v_i)g_P(V,v_j)g_P((1+ t\lambda_i)^{-1}v_i,v_j).\]
		Therefore,
		\[g_{P_t}(V,V) = \sum_i\dfrac{g_P(V,v_i)^2}{1 + t\lambda_i}.\]
         Once for each $i$ we have that ${\lambda_i} \ge \lambda$ we obtain
		\[g_{P_t}(V,V) = \sum_i\frac{g_P(V,v_i)^2}{1 + t\lambda_i} \le \sum_ig_P(V,v_i)^2\dfrac{1}{1+t\lambda} = \dfrac{1}{1+t\lambda}.\] Since we would like that
		\[ \dfrac{1}{1+t\lambda} \le \epsilon,\]
		we must have
		\[t \ge \dfrac{(1-\epsilon)}{\lambda \epsilon}.\qedhere\]
	\end{proof}
%
    	Finally, we recall that the map 
	\[(X,Y)\mapsto \|[X,Y]^{\mathcal{V}^{\pi}}\|^2_{g_P}\]
	is tensorial on $X,Y\in\cal (V^{\pi})^\bot$ (see, for instance, the definition of the $A$-tensor on \cite{oneill} or \cite{gw}). Therefore, assuming $P$ compact, there is a constant $C>0$ such that   \[\|[X,Y]^{\mathcal{V}^{\pi}}\|^2_{g_P}\leq C||X||_{g_P}^2||Y||^2_{g_P}.\]
    Observing that $g_{P_t}$ induces the same metric $g_M$ for every $t$, we conclude that, for any given $\epsilon>0$ and any $G$-invariant metric $g_M$, there is a metric $g_{M'}$ such that
    \begin{multline*}
        	K_{g_{M'}}(d\pi'X,d\pi'Y) =  K_{g_M}(d\pi X,d\pi Y) + \frac{3}{4}\Big\{\|[X,Y]^{\mathcal{V}^{\pi'}}\|^2_{g_P} - \|[{X},Y]^{\mathcal{V}^{\pi}}\|^2_{g_P}\Big\}
		\\\geq  K_{g_{M}}(d\pi X,d\pi Y) -{\epsilon}||X\wedge Y||_{g_P}^2.\qedhere
		   \end{multline*}
\end{proof}
		   
	Taking $\{e_1, \ldots, e_k\}$ an orthonormal base for $\mathcal{H}_p''$ and $X\in\cal H_{\pi'(p)}$,  we have:
	\begin{equation}\label{curvaturadericcihorizontalexemplo} \Ricci^{\mathcal{H}}_{g_{M'}}(X)  \geq \Ricci^{\mathcal{H}}_{g_{M}}(d\pi\mathcal{L}_{\pi'}X)-\epsilon||X||_{g_{M'}}^2.\end{equation}
	
	Recalling that  $M'\leftarrow P\to M$ is a $G$-$G$-bundle as well, we conclude:
	
	\begin{theorem}\label{thm:moritaRicci}
	Let $M'\leftarrow P\to M$ be a $G$-$G$-bundle ans suppose $P$ compact. Then, $M$ has a $G$-invariant metric of positive horizontal Ricci curvature if and only if $M'$ does so.
	\end{theorem}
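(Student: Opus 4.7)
The plan is to prove both directions of the equivalence by a single symmetric argument, exploiting the observation that $M\leftarrow P\to M'$ is a $G$-$G$-bundle if and only if $M'\leftarrow P\to M$ is one, obtained by interchanging the roles of the $\star$- and $\pi$-principal actions. So I only need to treat the forward direction: assuming $M$ admits a $G$-invariant metric $g_M$ of positive horizontal Ricci curvature, I would produce such a metric on $M'$.

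First I would invoke Proposition \ref{prop:existence-invariant-metric} to lift $g_M$ to a $G{\times}G$-invariant Kaluza-Klein metric $g_P$ on $P$, which in turn induces a $G$-invariant submersion metric $g_{M'}$ on $M'$. The bookkeeping device throughout will be the chain of fiberwise isometries $\cal H\stackrel{d\pi}{\leftarrow}\cal H''\stackrel{d\pi'}{\to}\cal H'$ (from Claim \ref{claim:2}): any orthonormal basis of $\cal H''_p$ descends to orthonormal bases of $\cal H_{\pi(p)}$ and of $\cal H'_{\pi'(p)}$, so Ricci sums transported through this chain are preserved term-by-term.

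Next I would perform a finite Cheeger deformation of $P$ along the $\pi$-principal $G$-action in order to suppress the term $\|[X,Y]^{\cal V^\pi}\|_{g_P}^2$ appearing in O'Neill's formula for $\pi$. The essential (and slightly subtle) point is that such a deformation leaves $g_M$ unchanged (it rescales only in the $\cal V^\pi$-direction and $\pi$ remains a Riemannian submersion), while it does modify $g_{M'}$. Applying Proposition \ref{prop:util} and summing the resulting sectional curvature bound over an orthonormal basis of $\cal H''_p$ reproduces inequality \eqref{curvaturadericcihorizontalexemplo}:
\[\Ricci^{\cal H}_{g_{M'_t}}(X) \;\geq\; \Ricci^{\cal H}_{g_M}(d\pi\,\cal L_{\pi'}X) - \epsilon\|X\|^2.\]

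Finally I would close the argument by compactness. Since $P$ is compact, so are $M$ and $M'$, and the assumed positivity of $\Ricci^{\cal H}_{g_M}$ delivers a uniform constant $c>0$ with $\Ricci^{\cal H}_{g_M}(Y)\geq c\|Y\|^2$ on horizontal vectors $Y$ (minimum over the compact horizontal unit bundle of $M$). Because $d\pi\,\cal L_{\pi'}$ is an isometry between horizontal spaces, choosing any $\epsilon<c$ in Proposition \ref{prop:util} and the corresponding $t$ forces $\Ricci^{\cal H}_{g_{M'_t}}>0$ on $M'$; the converse follows by the symmetry noted at the outset. The only real technical hurdle, already absorbed into Lemma \ref{util} and Proposition \ref{prop:util}, is guaranteeing that $\epsilon$ (and hence $t$) may be chosen uniformly on $P$; this reduces to extracting a positive infimum of the $\pi$-principal orbit tensor's eigenvalues, which is supplied by compactness of $P$ together with freeness of the principal action.
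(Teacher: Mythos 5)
Your proposal is correct and follows essentially the same route as the paper: lift $g_M$ to a Kaluza--Klein $G{\times}G$-invariant metric on $P$ (Proposition \ref{prop:existence-invariant-metric}), Cheeger-deform along the $\pi$-principal $G$-action to kill the $\|[\cdot,\cdot]^{\cal V^\pi}\|^2$ correction in O'Neill's formula without altering $g_M$ (Lemma \ref{util} and Proposition \ref{prop:util}), sum over an orthonormal frame of $\cal H''$ to obtain \eqref{curvaturadericcihorizontalexemplo}, close by compactness, and invoke the $\pi\leftrightarrow\pi'$ symmetry of the diagram for the converse. You also correctly pinpoint the technical crux — a uniform lower bound on the $\pi$-principal orbit tensor's eigenvalues from compactness plus freeness — which is exactly what drives Lemma \ref{util}.
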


	Positive Ricci curvature on the \textit{exotic} projective planes $\Sigma P\bb C^{2m-1}$, $\Sigma P\bb H^{2m-1}$ follows from Theorem \ref{thm:principal} since vectors orthogonal to the orbits of the $U(m)$- and $Sp(m)$-action lift to vectors orthogonal to the $O(n)$-orbits on $\Sigma^{2n-1}$. In fact, positive horizontal Ricci curvature on $\Sigma^{2n-1}$ (provided by Theorem \ref{thm:moritaRicci}) ensures positive horizontal Ricci curvature on  $\Sigma P\bb C^{2m-1}$ and $\Sigma P\bb H^{2m-1}$. The conclusion follows from the fact the $U(m)$-, respectively, $Sp(m)$-orbits have finite fundamental group.

	\bibliographystyle{plain}
	
	\bibliography{main}

\end{document}